\providecommand{\U}[1]{\protect\rule{.1in}{.1in}}
\let\Horig\H
\newtheorem{theorem}{Theorem}
\newtheorem{corollary}[theorem]{Corollary}
\newtheorem{definition}[theorem]{Definition}
\newtheorem{lemma}[theorem]{Lemma}
\newtheorem{proposition}[theorem]{Proposition}
\newtheorem{remark}[theorem]{Remark}
\newenvironment{proof}[1][Proof]{\noindent\textbf{#1.} }{\ \rule{0.5em}{0.5em}}
\begin{document}

\title{Asymptotic behavior of extremals for fractional Sobolev inequalities
associated with singular problems}
\author{G. Ercole\thanks{Corresponding author} , G.A. Pereira, and R. Sanchis\\{\small grey@mat.ufmg.br, gilbertoapereira@yahoo.com.br,
rsanchis@mat.ufmg.br\bigskip}\\Universidade Federal de Minas Gerais, Belo Horizonte, MG,\\30.123-970, Brazil. }
\maketitle

\begin{abstract}
Let $\Omega$ be a smooth, bounded domain of $\mathbb{R}^{N}$, $\omega$ be a
positive, $L^{1}$-normalized function, and $0<s<1<p.$ We study the asymptotic
behavior, as $p\rightarrow\infty,$ of the pair $\left(  \sqrt[p]{\Lambda_{p}%
},u_{p}\right)  ,$ where $\Lambda_{p}$ is the best constant $C$ in the Sobolev
type inequality
\[
C\exp\left(  \int_{\Omega}(\log\left\vert u\right\vert ^{p})\omega
\mathrm{d}x\right)  \leq\left[  u\right]  _{s,p}^{p}\quad\forall\,u\in
W_{0}^{s,p}(\Omega)
\]
and $u_{p}$ is the positive, suitably normalized extremal function
corresponding to $\Lambda_{p}$. We show that the limit pairs are closely
related to the problem of minimizing the quotient $\left\vert u\right\vert
_{s}/\exp\left(  \int_{\Omega}(\log\left\vert u\right\vert )\omega
\mathrm{d}x\right)  ,$ where $\left\vert u\right\vert _{s}$ denotes the
$s$-H\"{o}lder seminorm of a function $u\in C_{0}^{0,s}(\overline{\Omega}).$

\end{abstract}

\noindent\textbf{2010 AMS Classification.} 35D40, 35R11, 35J60.

\noindent\textbf{Keywords:} Asymptotic behavior, Fractional $p$-Laplacian,
Singular problem, Viscosity solution.

\section{Introduction}

Let $\Omega$ be a smooth (at least Lipschitz) domain of $\mathbb{R}^{N}$ and
consider the fractional Sobolev space
\[
W_{0}^{s,p}(\Omega):=\left\{  u\in L^{p}(\mathbb{R}^{N}):u=0\ \mathrm{in}%
\ \mathbb{R}^{N}\setminus\Omega\quad\mathrm{and}\quad\left[  u\right]
_{s,p}<\infty\right\}  ,\quad0<s<1<p,
\]
where
\[
\left[  u\right]  _{s,p}:=\left(  \int_{\mathbb{R}^{N}}\int_{\mathbb{R}^{N}%
}\frac{\left\vert u(x)-u(y)\right\vert ^{p}}{\left\vert x-y\right\vert
^{N+sp}}\mathrm{d}x\mathrm{d}y\right)  ^{\frac{1}{p}}.
\]

It is well-known that the Gagliardo seminorm $\left[  \cdot\right]  _{s,p}$ is
a norm in $W_{0}^{s,p}(\Omega)$ and that this Banach space is uniformly
convex. Actually,
\[
W_{0}^{s,p}(\Omega)=\overline{C_{c}^{\infty}(\Omega)}^{\left[  \cdot\right]
_{s,p}}.
\]

Let $\omega$ be a nonnegative function in $L^{1}(\Omega)$ satisfying
$\left\Vert \omega\right\Vert _{L^{1}(\Omega)}=1$ and define
\[
\mathcal{M}_{p}:=\left\{  u\in W_{0}^{s,p}(\Omega):\int_{\Omega}%
(\log\left\vert u\right\vert )\omega\mathrm{d}x=0\right\}
\]
and%
\begin{equation}
\Lambda_{p}:=\inf\left\{  \left[  u\right]  _{s,p}^{p}:u\in\mathcal{M}%
_{p}\right\}  . \label{Lambdap}%
\end{equation}

In the recent paper \cite{EPmana18} is proved that $\Lambda_{p}>0$ and that
\begin{equation}
\Lambda_{p}\exp\left(  \int_{\Omega}(\log\left\vert u\right\vert ^{p}%
)\omega\mathrm{d}x\right)  \leq\left[  u\right]  _{s,p}^{p}\quad\forall\,u\in
W_{0}^{s,p}(\Omega), \label{sineq}%
\end{equation}
provided that $\Lambda_{p}<\infty.$ Moreover, the equality in this Sobolev
type inequality holds if, and only if, $u$ is a scalar multiple of the
function $u_{p}\in\mathcal{M}_{p}$ which is the only weak solution of the
problem
\begin{equation}
\left\{
\begin{array}
[c]{lll}%
\left(  -\Delta_{p}\right)  ^{s}u=\Lambda_{p}u^{-1}\omega & \mathrm{in} &
\Omega\\
u>0 & \mathrm{in} & \Omega\\
u=0 & \mathrm{in} & \mathbb{R}^{N}\setminus\Omega.
\end{array}
\right.  \label{extremal}%
\end{equation}
Here, $\left(  -\Delta_{p}\right)  ^{s}$ is the $s$-fractional $p$-Laplacian,
formally defined by%
\[
\left(  -\Delta_{p}\right)  ^{s}u(x)=-2\int_{\mathbb{R}^{N}}\dfrac{\left\vert
u(y)-u(x)\right\vert ^{p-2}(u(y)-u(x))}{\left\vert y-x\right\vert ^{N+sp}%
}\mathrm{d}y.
\]

We recall that a weak solution of the equation in (\ref{extremal}) is a
function $u\in W_{0}^{s,p}(\Omega)$ satisfying
\[
\left\langle \left(  -\Delta_{p}\right)  ^{s}u,\varphi\right\rangle
=\Lambda_{p}\int_{\Omega}u^{-1}\varphi\omega\mathrm{d}x\quad\forall
\,\varphi\in W_{0}^{s,p}(\Omega),
\]
where
\[
\left\langle \left(  -\Delta_{p}\right)  ^{s}u,\varphi\right\rangle
:=\int_{\mathbb{R}^{N}}\int_{\mathbb{R}^{N}}\dfrac{\left\vert
u(x)-u(y)\right\vert ^{p-2}(u(x)-u(y))(\varphi(x)-\varphi(y))}{\left\vert
x-y\right\vert ^{N+sp}}\mathrm{d}x\mathrm{d}y
\]
is the expression of $\left(  -\Delta_{p}\right)  ^{s}$ as an operator from
$W_{0}^{s,p}(\Omega)$ into its dual.

The purpose of this paper is to determine both the asymptotic behavior of the
pair $\left(  \sqrt[p]{\Lambda_{p}},u_{p}\right)  $, as $p\rightarrow\infty$,
and the corresponding limit problem of (\ref{extremal}). In our study
$s\in(0,1)$ is kept fixed.

After introducing, in Section \ref{Sec0}, the notation used throughout the
paper, we prove in Section \ref{sec1} that $\Lambda_{p}<\infty$ by
constructing a function $\xi\in C_{0}^{0,1}(\overline{\Omega})\cap
\mathcal{M}_{p}.$ In the simplest case $\omega\equiv\left\vert \Omega
\right\vert ^{-1}$ this was made in \cite{EPjam19} where the inequality
(\ref{sineq}) corresponding to the standard Sobolev Space $W_{0}^{1,p}%
(\Omega)$ has been derived.

In Section \ref{sec2}, we show that the limit problem is closely related to
the problem of minimizing the quotient
\[
Q_{s}(u):=\frac{\left\vert u\right\vert _{s}}{\exp\left(  \int_{\Omega}%
(\log\left\vert u\right\vert )\omega\mathrm{d}x\right)  }%
\]
on the Banach space $\left(  C_{0}^{0,s}(\overline{\Omega}),\left\vert
\cdot\right\vert _{s}\right)  $ of the $s$-H\"{o}lder continuous functions in
$\overline{\Omega}$ that are zero on the boundary $\partial\Omega.$ Here,
$\left\vert u\right\vert _{s}$ denotes the $s$-H\"{o}lder seminorm of $u$ (see
(\ref{bholder})).

We prove that if $p_{n}\rightarrow\infty$ then (up to a subsequence)
\[
u_{p_{n}}\rightarrow u_{\infty}\in C_{0}^{0,s}(\overline{\Omega}%
)\ \mathrm{uniformly\ in}\ \overline{\Omega},\quad\mathrm{and}\quad
\sqrt[p_{n}]{\Lambda_{p_{n}}}\rightarrow\left\vert u_{\infty}\right\vert
_{s}.
\]
Moreover, the limit function $u_{\infty}$ satisfies%
\[
\int_{\Omega}(\log\left\vert u_{\infty}\right\vert )\omega\mathrm{d}%
x\geq0\quad\mathrm{and}\quad Q_{s}(u_{\infty})\leq Q_{s}(u)\quad\forall\ u\in
C_{0}^{0,s}(\overline{\Omega})\setminus\left\{  0\right\}
\]
and the only minimizers of the quotient $Q_{s}$ are the scalar multiples of
$u_{\infty}.$

One of the difficulties we face in Section \ref{sec2} is that $C_{c}^{\infty
}(\Omega)$ is not dense in $\left(  C_{0}^{0,s}(\Omega),\left\vert
\cdot\right\vert _{s}\right)  .$ This makes it impossible to directly exploit
the fact that $u_{p}$ is a weak solution of (\ref{extremal}). We overcome this
issue by using a convenient technical result proved in \cite[Lemma 3.2]{LiChu}
and employed in \cite{BLP} to deal with a similar approximation matter.

In Section \ref{sec3}, motived by \cite{Chamb, FPL, LindLind}, we derive the
limit problem of (\ref{extremal}). Assuming that $\omega$ is continuous and
positive in $\Omega$ we prove that $u_{\infty}$ is a viscosity solution of%
\[
\left\{
\begin{array}
[c]{lll}%
\mathcal{L}_{\infty}^{-}u+\left\vert u\right\vert _{s}=0 & \mathrm{in} &
\Omega\\
u=0 & \mathrm{in} & \mathbb{R}^{N}\setminus\Omega
\end{array}
\right.
\]
where%
\[
\left(  \mathcal{L}_{\infty}^{-}u\right)  (x):=\inf_{y\in\mathbb{R}%
^{N}\setminus\left\{  x\right\}  }\frac{u(y)-u(x)}{\left\vert y-x\right\vert
^{s}}.
\]

We also show $u_{\infty}$ is a viscosity supersolution of
\[
\left\{
\begin{array}
[c]{lll}%
\mathcal{L}_{\infty}u=0 & \mathrm{in} & \Omega\\
u=0 & \mathrm{in} & \mathbb{R}^{N}\setminus\Omega
\end{array}
\right.
\]
where
\[
\mathcal{L}_{\infty}:=\mathcal{L}_{\infty}^{+}+\mathcal{L}_{\infty}^{-}%
\]
and
\[
\left(  \mathcal{L}_{\infty}^{+}u\right)  (x):=\sup_{y\in\mathbb{R}%
^{N}\setminus\left\{  x\right\}  }\frac{u(y)-u(x)}{\left\vert y-x\right\vert
^{s}}.
\]
This fact guarantees that $u_{\infty}>0$ in $\Omega.$

The existing literature on the asymptotic behavior (as $p\rightarrow\infty$)
of solutions of problems involving the $p$-Laplacian is most focused on the
local version of the operator, that is, on the problem%
\begin{equation}
\left\{
\begin{array}
[c]{lll}%
-\Delta_{p}u=f(x,u) & \mathrm{in} & \Omega\\
u=0 & \mathrm{on} & \mathbb{\partial}\Omega
\end{array}
\right.  \label{local}%
\end{equation}
where $\Delta_{p}u=\operatorname{div}\left(  \left\vert \nabla u\right\vert
^{p-2}\nabla u\right)  $ is the standard $p$-Laplacian. This kind of
asymptotic behavior has been studied for at least three decades (see
\cite{BDM, Fuka, JLM}) and many new results, adding the dependence of $p$ in
the term $f(x,u),$ are still being produced (see \cite{ChaPe, Chapa, ChaPa2,
EPmana16}). The solutions of (\ref{local}) are obtained in the natural Sobolev
space $W_{0}^{1,p}(\Omega)$ and an important property related to this space,
crucial in the study of the asymptotic behavior of the corresponding family of
solutions $\left\{  u_{p}\right\}  ,$ is the inclusion
\[
W_{0}^{1,p_{2}}(\Omega)\subset W_{0}^{1,p_{1}}(\Omega)\quad\mathrm{whenever}%
\quad1<p_{1}<p_{2}.
\]
It allows us to show that any uniform limit function $u_{\infty}$ of the
sequence $\left\{  u_{p_{n}}\right\}  $ (with $p_{n}\rightarrow\infty$) is
admissible as a test function in the weak formulation of (\ref{local}), so
that $u_{\infty}$ inherits certain properties of the functions of $\left\{
u_{p_{n}}\right\}  .$

Since the inclusion $W_{0}^{s,p_{2}}(\Omega)\subset W_{0}^{s,p_{1}}(\Omega)$
does not hold when $0<s<1<p_{1}<p_{2}$ (see \cite{Miro}) the asymptotic
behavior, as $p\rightarrow\infty,$ of the solutions of the problem
\begin{equation}
\left\{
\begin{array}
[c]{lll}%
(-\Delta_{p})^{s}u=f(x,u) & \mathrm{in} & \Omega\\
u=0 & \mathrm{in} & \mathbb{R}^{N}\setminus\Omega
\end{array}
\right.  \label{nonlocal}%
\end{equation}
is more difficult to be determined. For example, in the case considered in the
present paper ($f(x,u)=\omega(x)/u$) we cannot ensure that the property
\[
\int_{\Omega}(\log\left\vert u_{p_{n}}\right\vert )\omega\mathrm{d}x=0
\]
is inherited by the limit function $u_{\infty}$ (see Remark \ref{kleq0}).
Actually, we are able to prove only that
\[
\int_{\Omega}(\log u_{\infty})\omega\mathrm{d}x\geq0.
\]
As a consequence, the limit functions of the family $\left\{  u_{p}\right\}
_{p>1}$ might not be unique.

The study of the asymptotic behavior, as $p\rightarrow\infty,$ of the
solutions of (\ref{nonlocal}) is quite recent and restricted to few works. In
\cite{LindLind} the authors considered $f(x,u)=\lambda_{p}\left\vert
u\right\vert ^{p-2}u$ where $\lambda_{p}$ is the first eigenvalue of the
$s$-fractional $p$-Laplacian. Among other results, they proved that%
\[
\lim_{p\rightarrow\infty}\sqrt[p]{\lambda_{p}}=R^{-s},
\]
where $R$ is the radius of the largest ball inscribed in $\Omega,$ and that
limit function $u_{\infty}$ of the family $\left\{  u_{p}\right\}  $ is a
positive viscosity solution of
\[
\max\left\{  \mathcal{L}_{\infty}u\ ,\ \mathcal{L}_{\infty}^{-}u+R^{-s}%
u\right\}  =0.
\]

The equation in (\ref{nonlocal}) with $f=0$ and under the nonhomogeneous
boundary condition $u=g$ in $\mathbb{R}^{N}\setminus\Omega$ was first studied
in \cite{Chamb}. It is shown that the limit function is an optimal
$s$-H\"{o}lder extension of $g\in C^{0,s}(\partial\Omega)$ and also a
viscosity solution of the equation%
\[
\mathcal{L}_{\infty}u=0\quad\mathrm{in}\ \partial\Omega.
\]
Moreover, some tools for studying the behavior as $p\rightarrow\infty$ of the
solutions of (\ref{nonlocal}) are developed there.

In \cite{FPL}, also under the boundary condition $u=g$ in $\mathbb{R}%
^{N}\setminus\Omega,$ the cases $f=f(x)$ and $f=f(u)=\left\vert u\right\vert
^{\theta(p)-2}u$ with $\Theta:=\lim_{p\rightarrow\infty}\theta(p)/p<1$ are
studied. In the first case, different limit equations involving the operators
$\mathcal{L}_{\infty},$ $\mathcal{L}_{\infty}^{+}$ and $\mathcal{L}_{\infty
}^{-}$ are derived according to the sign of the function $f(x),$ what
resembles the known results obtained in \cite{BDM}, where the standard
$p$-Laplacian is considered. For example, the limit function $u_{\infty}$ is a
viscosity solution of
\[
-\mathcal{L}_{\infty}^{-}u=1\quad\mathrm{in}\ \left\{  f>0\right\}  .
\]
As for the second case, the limit equation is%
\[
\min\left\{  -\mathcal{L}_{\infty}^{-}u-u^{\Theta},-\mathcal{L}_{\infty
}u\right\}  =0
\]
which is consistent with the limit equation obtained in \cite{ChaPe} for the
standard $p$-Laplacian and $f(u)=\left\vert u\right\vert ^{\theta(p)-2}u$
satisfying $\Theta:=\lim_{p\rightarrow\infty}\theta(p)/p<1.$

\section{Notation\label{Sec0}}

The ball centered at $x\in\mathbb{R}^{N}$ with radius $\rho$ is denoted by
$B(x,\rho)$ and $\delta$ stands for the distance function to the boundary
$\partial\Omega,$ defined by%
\[
\delta(x):=\min_{y\in\partial\Omega}\left\vert x-y\right\vert ,\quad
x\in\overline{\Omega}.
\]

We recall that $\delta\in C_{0}^{0,1}(\overline{\Omega})$ and satisfies
$\left\vert \nabla\delta\right\vert =1$ a.e. in $\Omega.$ Here,
\[
C_{0}^{0,\beta}(\overline{\Omega}):=\left\{  u\in C^{0,\beta}(\overline
{\Omega}):\,u=0\,\mathrm{on}\,\partial\Omega\right\}  ,\quad0<\beta\leq1,
\]
where $C^{0,\beta}(\overline{\Omega})$ is the well-known $\beta$-H\"{o}lder
space endowed with the norm
\[
\left\Vert u\right\Vert _{0,\beta}=\left\Vert u\right\Vert _{\infty
}+\left\vert u\right\vert _{\beta}%
\]
with $\left\Vert u\right\Vert _{\infty}$ denoting the sup norm of $u$ and
$\left\vert u\right\vert _{\beta}$ denoting the $\beta$-H\"{o}lder seminorm,
that is,
\begin{equation}
\left\vert u\right\vert _{\beta}:=\sup_{x,y\in\overline{\Omega},x\not =y}%
\frac{\left\vert u(x)-u(y)\right\vert }{\left\vert x-y\right\vert ^{\beta}}.
\label{bholder}%
\end{equation}

We recall that $\left(  C_{0}^{0,\beta}(\overline{\Omega}),\left\vert
\cdot\right\vert _{\beta}\right)  $ is a Banach space. The fact that the
$\beta$-H\"{o}lder seminorm $\left\vert \cdot\right\vert _{\beta}$ is a norm
in $C_{0}^{0,\beta}(\overline{\Omega})$ equivalent to $\left\Vert u\right\Vert
_{0,\beta}$ is a consequence of the estimate
\[
\left\Vert u\right\Vert _{\infty}\leq\left\vert u\right\vert _{\beta
}\left\Vert \delta\right\Vert _{\infty}^{\beta}\quad\forall\,u\in
C_{0}^{0,\beta}(\overline{\Omega}),
\]
which in turn follows from the following
\begin{equation}
\left\vert u(x)\right\vert =\left\vert u(x)-u(y_{x})\right\vert \leq\left\vert
u\right\vert _{\beta}\left\vert x-y_{x}\right\vert ^{\beta}=\left\vert
u\right\vert _{\beta}\delta(x)^{\beta}\quad\forall\,x\in\Omega, \label{udelta}%
\end{equation}
where $y_{x}\in\partial\Omega$ is such that $\delta(x)=\left\vert
x-y_{x}\right\vert .$

We also define
\[
C_{c}^{\infty}(\Omega):=\left\{  u\in C^{\infty}(\Omega):\operatorname*{supp}%
(f)\subset\subset\Omega\right\}
\]
where
\[
\operatorname*{supp}(u):=\left\{  x\in\Omega:u(x)\not =0\right\}
\]
is the support of $u$ and $X\subset\subset Y$ means that $\overline{X}$ is a
compact subset of $Y$. Analogously, we define $E_{c}$ if $E$ is a space of
functions (e.g. $C_{c}(\mathbb{R}^{N}),$ $C_{c}(\mathbb{R}^{N};\mathbb{R}%
^{N}),$ $C_{c}^{0,\beta}(\overline{\Omega})$).

\section{Finiteness of $\Lambda_{p}$\label{sec1}}

Let us recall the Federer's co-area formula (see \cite{Fed})%
\[
\int_{\Omega}g(x)\left\vert \nabla f(x)\right\vert \mathrm{d}x=\int_{-\infty
}^{\infty}\left(  \int_{f^{-1}\left\{  t\right\}  }g(x)\mathrm{d}%
\mathcal{H}_{N-1}\right)  \mathrm{d}t,
\]
which holds whenever $g\in L^{1}(\Omega)$ and $f\in C^{0,1}(\overline{\Omega
})$. (In this formula $\mathcal{H}_{N-1}$ stands for the $(N-1)$-dimensional
Hausdorff measure).

In the particular case $f=\delta$ the above formula becomes%
\begin{equation}
\int_{\Omega}g(x)\mathrm{d}x=\int_{0}^{\left\Vert \delta\right\Vert _{\infty}%
}\left(  \int_{\delta^{-1}\left\{  t\right\}  }g(x)\mathrm{d}\mathcal{H}%
_{N-1}\right)  \mathrm{d}t.\label{coa}%
\end{equation}

\begin{proposition}
\label{mfin}Let $\omega\in L^{1}(\Omega)$ such that
\begin{equation}
\int_{\Omega}\omega\mathrm{d}x=1\quad\mathrm{and}\quad\omega\geq
0\quad\mathrm{a.e.}\,\mathrm{in}\,\Omega.\label{weight}%
\end{equation}
There exists a nonnegative function $\xi\in C(\overline{\Omega})$ that
vanishes on the boundary $\partial\Omega$ and satisfies%
\[
\int_{\Omega}(\log\left\vert \xi\right\vert )\omega\mathrm{d}x=0.
\]
If, in addition,
\begin{equation}
K_{\epsilon}:=\operatorname*{ess}_{0\leq t\leq\epsilon}\int_{\delta
^{-1}\left\{  t\right\}  }\omega\mathrm{d}\mathcal{H}_{N-1}<\infty\label{Keps}%
\end{equation}
for some $\epsilon>0,$ then $\xi\in C_{0}^{0,1}(\overline{\Omega}).$
\end{proposition}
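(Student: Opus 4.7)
I would look for $\xi$ of the form $\xi(x)=c\,g(\delta(x))$, where $g\colon[0,\|\delta\|_{\infty}]\to[0,\infty)$ is continuous with $g(0)=0$ and $g>0$ on $(0,\|\delta\|_{\infty}]$, so that $\xi\in C(\overline{\Omega})$ automatically vanishes on $\partial\Omega$. Since $\log(cg(\delta))=\log c+\log g(\delta)$, the normalization $\int_{\Omega}(\log\xi)\omega\mathrm{d}x=0$ reduces to a single algebraic choice of $c$ \emph{provided} that
\[
J(g):=\int_{\Omega}\log g(\delta(x))\,\omega(x)\,\mathrm{d}x
\]
is a finite real number. So the real content of the proof is the finiteness of $J(g)$ for an appropriate continuous $g$, and whether $g$ can additionally be chosen to make $\xi$ Lipschitz when (\ref{Keps}) is assumed.

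\textbf{The Lipschitz case.} Under (\ref{Keps}) I would take $g(t)=t$, so that $\xi=c\,\delta$ lies in $C_{0}^{0,1}(\overline{\Omega})$ automatically. Applying the coarea formula (\ref{coa}) to the positive and negative parts of $(\log\delta)\,\omega$ yields
\[
\int_{\Omega}(\log\delta)\,\omega\,\mathrm{d}x=\int_{0}^{\|\delta\|_{\infty}}(\log t)\,F(t)\,\mathrm{d}t,\qquad F(t):=\int_{\delta^{-1}\{t\}}\omega\,\mathrm{d}\mathcal{H}_{N-1},
\]
and this is finite: on $[0,\epsilon]$ the bound (\ref{Keps}) gives $\int_{0}^{\epsilon}|\log t|\,F(t)\,\mathrm{d}t\leq K_{\epsilon}\int_{0}^{\epsilon}|\log t|\,\mathrm{d}t<\infty$, while on $[\epsilon,\|\delta\|_{\infty}]$ the logarithm is bounded and $F\in L^{1}$ by (\ref{coa}) applied to $\omega$ itself.

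\textbf{The general case and main obstacle.} Without (\ref{Keps}) the choice $g(t)=t$ can fail because $F$ may blow up too quickly at $0$; the trick is to let $g$ decay at a rate dictated by $\omega$ itself. Since $\omega\in L^{1}(\Omega)$, dominated convergence gives $\int_{\{\delta<t\}}\omega\,\mathrm{d}x\to0$ as $t\to0^{+}$, so I can pick a strictly decreasing sequence $t_{n}\downarrow 0$ with $t_{0}:=\|\delta\|_{\infty}$ and $\int_{\{\delta<t_{n}\}}\omega\,\mathrm{d}x\leq 2^{-n}$ for $n\geq 1$. I would then let $g\colon[0,\|\delta\|_{\infty}]\to[0,1]$ be the continuous function that is affine on each $[t_{n+1},t_{n}]$ with nodal values $g(0)=0$ and $g(t_{n})=e^{-n}$. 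On the shell $\{t_{n+1}\leq\delta<t_{n}\}$ one has $g(\delta)\geq e^{-(n+1)}$, hence $|\log g(\delta)|\leq n+1$, so summing against the $2^{-n}$ bound gives
\[
\int_{\Omega}|\log g(\delta)|\,\omega\,\mathrm{d}x\leq 1+\sum_{n=1}^{\infty}(n+1)\,2^{-n}<\infty,
\]
and $\xi:=\exp(-J(g))\,g(\delta)$ finishes the proof. This dyadic shell estimate is the technical heart of the argument and where I expect almost all of the work to lie: the whole point is to produce a $g$ whose vanishing rate at $0$ is \emph{calibrated} to the concentration of $\omega$ near $\partial\Omega$, since for an arbitrary $L^{1}$ weight one has no pointwise control on $F$ at the boundary.
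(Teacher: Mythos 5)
Your proposal is correct, and it splits into two halves of different character relative to the paper. For the first assertion (existence of a continuous $\xi$), your construction is essentially the one in the paper: the authors also build $\xi_1=\varphi\circ\delta$ with $\varphi$ piecewise linear and nodal values $\varphi(t_n)=2^{-n}$ at levels $t_n$ chosen so that the $\omega$-mass of $\{\delta\le t_n\}$ is $2^{-n}$ (via the continuity of the distribution function $\sigma(t)=\int_{\{\delta>t\}}\omega\,\mathrm{d}x$), which is exactly your ``calibrated dyadic shell'' idea; the only difference is that they verify finiteness of the log-integral indirectly, by bounding $\lim_{\epsilon\to0}\bigl(\int_\Omega\xi_1^\epsilon\omega\,\mathrm{d}x\bigr)^{1/\epsilon}$ below by $1/4$, whereas you estimate $\int_\Omega|\log g(\delta)|\,\omega\,\mathrm{d}x\le 1+\sum_n(n+1)2^{-n}$ directly, which is arguably cleaner. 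For the Lipschitz assertion your route is genuinely different: you discard the piecewise-linear profile and take $\xi=c\,\delta$ outright, using \eqref{Keps} together with the coarea formula to show $\log\delta\in L^1(\omega\,\mathrm{d}x)$, so that the normalizing constant exists; the paper instead keeps the same $\xi_1=\varphi\circ\delta$ and uses \eqref{Keps} and the coarea formula to derive the lower bound $t_{n+1}\ge(2^{n+1}K_\epsilon)^{-1}$, hence the uniform difference-quotient bound $2K_\epsilon$ near $\partial\Omega$. Your version has the advantage of producing the explicit and natural extremal candidate $c\,\delta$ and of making the role of \eqref{Keps} transparent (it is precisely an integrability condition for $\log\delta$ against $\omega$); the paper's version has the advantage of producing a single $\xi$ that serves both conclusions, which matches the literal phrasing ``then $\xi\in C_0^{0,1}(\overline{\Omega})$'' (though for the use made of the proposition only existence matters, so this is cosmetic). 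One small point you should make explicit: the coarea formula as stated in \eqref{coa} is for $g\in L^1(\Omega)$, and applying it to $|\log\delta|\,\omega$ before knowing integrability is formally circular; this is harmless because the identity holds in the Tonelli sense for nonnegative measurable integrands (or apply \eqref{coa} to $\min(|\log\delta|,M)\,\omega$ and let $M\to\infty$ by monotone convergence), but a referee would want that sentence.
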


\begin{proof}
Let $\sigma:[0,\left\Vert \delta\right\Vert _{\infty}]\rightarrow\lbrack0,1]$
be the $\omega$-distribution associated with $\delta,$ that is,
\[
\sigma(t):=\int_{\Omega_{t}}\omega\mathrm{d}x,\quad t\in\lbrack0,\left\Vert
\delta\right\Vert _{\infty}]
\]
where%
\[
\Omega_{t}:=\left\{  x\in\Omega:\delta(x)>t\right\}
\]
is the $t$-superlevel set of $\delta.$

We remark that $\sigma$ is continuous at each point $t\in\lbrack0,\left\Vert
\delta\right\Vert _{\infty}]$ since the $t$-level set $\delta^{-1}\left\{
t\right\}  $ has Lebesgue measure zero. This follows, for example, from the
Lebesgue density theorem (see \cite{Erdos}, where the distance function to a
general closed set in $\mathbb{R}^{N}$ is considered).

Thus, there exists a nonincreasing sequence $\left\{  t_{n}\right\}
\subset\lbrack0,\left\Vert \delta\right\Vert _{\infty}]$ such that%
\[
\sigma(t_{n})=1-\frac{1}{2^{n}}.
\]

Now, choose a nondecreasing, piecewise linear function $\varphi\in
C([0,\left\Vert \delta\right\Vert _{\infty}])$ satisfying
\[
\varphi(0)=0\quad\mathrm{and}\quad\varphi(t_{n})=\frac{1}{2^{n}},
\]
and take the function%
\[
\xi_{1}:=\varphi\circ\delta\in C_{0}(\overline{\Omega}).
\]

Taking into account that
\[
t_{n+1}\leq\delta(x)\leq t_{n}\quad\mathrm{a.e.}\,x\in\Omega_{t_{n+1}%
}\setminus\Omega_{t_{n}}%
\]
one has
\[
\frac{1}{2^{n+1}}=\varphi(t_{n+1})\leq\xi_{1}(x)\leq\varphi(t_{n})=\frac
{1}{2^{n}}\quad\mathrm{a.e.}\,x\in\Omega_{t_{n+1}}\setminus\Omega_{t_{n}}.
\]
Consequently,
\begin{align*}
\int_{\Omega}\left\vert \xi_{1}\right\vert ^{\epsilon}\omega\mathrm{d}x  &
\geq\int_{\Omega_{t_{1}}}\left\vert \xi_{1}\right\vert ^{\epsilon}%
\omega\mathrm{d}x+\sum_{k=1}^{n}\int_{\Omega_{t_{k+1}}\setminus\Omega_{t_{k}}%
}\left\vert \xi_{1}\right\vert ^{\epsilon}\omega\mathrm{d}x\\
&  \geq\frac{1}{2^{\epsilon}}\int_{\Omega_{t_{1}}}\omega\mathrm{d}x+\sum
_{k=1}^{n}\frac{1}{2^{\epsilon(k+1)}}\int_{\Omega_{t_{k+1}}\setminus
\Omega_{t_{k}}}\omega\mathrm{d}x\\
&  =\frac{1}{2^{\epsilon}}\sigma(t_{1})+\sum_{k=1}^{n}\frac{1}{2^{\epsilon
(k+1)}}\left(  \sigma(t_{k+1})-\sigma(t_{k})\right) \\
&  =\frac{1}{2^{\epsilon}}\frac{1}{2}+\sum_{k=1}^{n}\frac{1}{2^{\epsilon
(k+1)}}\frac{1}{2^{k+1}}=\sum_{k=1}^{n+1}\left(  (1/2)^{\epsilon+1}\right)
^{k}.
\end{align*}

It follows that%
\[
\lim_{\epsilon\rightarrow0}\left(  \int_{\Omega}\left\vert \xi_{1}\right\vert
^{\epsilon}\omega\mathrm{d}x\right)  ^{\frac{1}{\epsilon}}\geq\lim
_{\epsilon\rightarrow0}\left(  \sum_{k=1}^{\infty}\left(  (1/2)^{\epsilon
+1}\right)  ^{k}\right)  ^{\frac{1}{\epsilon}}=\lim_{\epsilon\rightarrow
0}\left(  \frac{(1/2)^{\epsilon+1}}{1-(1/2)^{\epsilon+1}}\right)  ^{\frac
{1}{\epsilon}}=\frac{1}{4}.
\]

Taking $\xi:=k\xi_{1}$ with
\[
k=\lim_{\epsilon\rightarrow0}\left(  \int_{\Omega}\left\vert \xi
_{1}\right\vert ^{\epsilon}\omega\mathrm{d}x\right)  ^{-\frac{1}{\epsilon}}%
\]
we obtain, by L'H\^{o}pital's rule,
\[
1=\lim_{\epsilon\rightarrow0^{+}}\left(  \int_{\Omega}\left\vert
\xi\right\vert ^{\epsilon}\omega\mathrm{d}x\right)  ^{\frac{1}{\epsilon}}%
=\exp\left(  \int_{\Omega}(\log\left\vert \xi\right\vert )\omega
\mathrm{d}x\right)  .
\]
Hence,%
\[
\int_{\Omega}(\log\left\vert \xi\right\vert )\omega\mathrm{d}x=0.
\]

We now prove that $\xi_{1}\in C^{0,1}(\overline{\Omega})$ under the additional
hypothesis (\ref{Keps}). Since the nondecreasing function $\varphi$ can be
chosen such that $\varphi^{\prime}$ is bounded in any closed interval
contained in $(0,\left\Vert \delta\right\Vert _{\infty}],$ we can assume that
$\nabla\xi_{1}\in L_{\operatorname{loc}}^{\infty}(\Omega)$ (note that
$\left\vert \nabla\xi_{1}\right\vert =\left\vert \varphi^{\prime}%
(\delta)\nabla\delta\right\vert =\left\vert \varphi^{\prime}(\delta
)\right\vert $ a.e. in $\Omega$).

Thus, it suffices to show that the quotient
\[
Q(x,y):=\frac{\left\vert \xi_{1}(x)-\xi_{1}(y)\right\vert }{\left\vert
x-y\right\vert }%
\]
is bounded uniformly with respect to $y\in\partial\Omega$ and $x\in
\Omega_{\epsilon}^{c}:=\left\{  x\in\overline{\Omega}:\delta(x)\leq
\epsilon\right\}  ,$ where $\epsilon$ is given by (\ref{Keps}).

Let $x\in\Omega_{\epsilon}^{c}$ and $y\in\partial\Omega$ be fixed and chose
$n\in\mathbb{N}$ sufficiently large such that
\[
t_{n+1}<\delta(x)\leq t_{n}\leq\epsilon.
\]
Since $\xi_{1}(y)=0$ and $\varphi$ is nondecreasing one has%
\[
\left\vert \xi_{1}(x)-\xi_{1}(y)\right\vert =\xi_{1}(x)\leq\varphi
(t_{n})=\frac{1}{2^{n}}.
\]
Moreover,
\[
t_{n+1}<\delta(x)\leq\left\vert x-y\right\vert .
\]

Hence,
\[
Q(x,y)\leq\frac{1}{2^{n}t_{n+1}}\quad\mathrm{whenever}\,y\in\partial
\Omega\,\mathrm{and}\,x\in\Omega_{\epsilon}^{c}.
\]

Applying the co-area formula (\ref{coa}) with $g=\omega$ and $\Omega
=\Omega_{t_{n}+1}^{c}$ we find
\[
\frac{1}{2^{n+1}}=\int_{\Omega_{t_{n}+1}^{c}}\omega\mathrm{d}x=\int
_{0}^{t_{n+1}}\left(  \int_{\delta^{-1}\left\{  t\right\}  }\omega
\mathrm{d}\mathcal{H}_{N-1}\right)  \mathrm{d}t\leq K_{\epsilon}t_{n+1}.
\]

It follows that%
\begin{equation}
Q(x,y)\leq\frac{1}{2^{n}t_{n+1}}\leq\frac{K_{\epsilon}2^{n+1}}{2^{n}%
}=2K_{\epsilon}\quad\mathrm{whenever}\,y\in\partial\Omega\,\mathrm{and}%
\,x\in\Omega_{\epsilon}^{c}, \label{weyl}%
\end{equation}
concluding thus the proof that $\xi_{1}\in C^{0,1}(\overline{\Omega}).$
\end{proof}

\begin{remark}
The estimate (\ref{weyl}) can also be obtained from the Weyl's Formula (see
\cite{Gray}) provided that $\omega$ is bounded on an $\epsilon$-tubular
neighborhood of $\partial\Omega.$
\end{remark}

In the remaining of this section $\xi$ denotes the function obtained in
Proposition \ref{mfin} extended as zero outside $\Omega.$ So,
\[
\xi\in C_{0}^{0,1}(\overline{\Omega})\quad\mathrm{and}\quad\int_{\Omega}%
(\log\left\vert \xi\right\vert )\omega\mathrm{d}x=0.
\]
Since $C_{0}^{0,1}(\overline{\Omega})\subseteq W_{0}^{1,p}(\Omega)\subseteq
W_{0}^{s,p}(\Omega)$ we have $\xi\in\mathcal{M}_{p}$ (for a proof of the
second inclusion see \cite{Guide}). Therefore,
\begin{equation}
\Lambda_{p}\leq\left[  \xi\right]  _{s,p}^{p}\quad\forall\,p>1. \label{Lamfin}%
\end{equation}

Combining (\ref{Lamfin}) with the results proved in \cite[Section 4]{EPmana18}
(which requires $\omega\in L^{r}(\Omega),$ for some $r>1$) we have the
following theorem.

\begin{theorem}
\label{teomana}Let $\omega$ be a function in $L^{r}(\Omega),$ for some $r>1,$
satisfying (\ref{weight})-(\ref{Keps}). For each $p>1,$ the infimum
$\Lambda_{p}$ in (\ref{Lambdap}) is attained by a function $u_{p}%
\in\mathcal{M}_{p}$ which is the only positive weak solution of
\[
\left(  -\Delta_{p}\right)  ^{s}u=\Lambda_{p}u^{-1}\omega,\quad u\in
W_{0}^{s,p}(\Omega).
\]

\end{theorem}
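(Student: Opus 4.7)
The finiteness $\Lambda_{p}<\infty$ is immediate from (\ref{Lamfin}): the function $\xi$ produced by Proposition \ref{mfin} belongs to $C_{0}^{0,1}(\overline{\Omega})\cap\mathcal{M}_{p}\subset W_{0}^{s,p}(\Omega)\cap\mathcal{M}_{p}$ and is therefore admissible. What remains is to (i) attain the infimum, (ii) derive the singular Euler--Lagrange equation, and (iii) establish uniqueness among positive weak solutions. My plan is the direct method of the calculus of variations, with the sharp inequality (\ref{sineq}) doing the heavy lifting for both the closure of the log constraint in the weak limit and for the uniqueness statement.

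I would take a minimizing sequence $\{u_{n}\}\subset\mathcal{M}_{p}$ with $[u_{n}]_{s,p}^{p}\to\Lambda_{p}$. Since $||u_{n}(x)|-|u_{n}(y)||\le|u_{n}(x)-u_{n}(y)|$ and $\log|u_{n}|=\log||u_{n}||$, replacing $u_{n}$ by $|u_{n}|$ preserves both the Gagliardo seminorm and the constraint, so I may assume $u_{n}\ge 0$. Boundedness in the uniformly convex, reflexive Banach space $W_{0}^{s,p}(\Omega)$ yields a weakly convergent subsequence $u_{n}\rightharpoonup u_{p}$, and the compact fractional Sobolev embedding delivers $u_{n}\to u_{p}$ almost everywhere and strongly in $L^{q}(\Omega)$ for a suitable $q\ge p$. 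Lower semicontinuity of the Gagliardo seminorm then already gives $[u_{p}]_{s,p}^{p}\le\Lambda_{p}$.

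The main technical step is to verify that $u_{p}\in\mathcal{M}_{p}$, since $\log$ can develop a $-\infty$ singularity where $u_{n}\to 0$. Using the elementary bound $\log t\le(t^{p}-1)/p$ together with the uniform $L^{p}$ control on $u_{n}$ and H\"older's inequality against $\omega\in L^{r}$, one obtains a uniform $L^{1}(\omega\,\mathrm{d}x)$ majorant of $(\log u_{n})^{+}$, so the reverse Fatou lemma delivers
\[
\int_{\Omega}(\log u_{p})\,\omega\,\mathrm{d}x\ \ge\ \limsup_{n}\int_{\Omega}(\log u_{n})\,\omega\,\mathrm{d}x\ =\ 0.
\]
Setting $c:=\exp\bigl(-\tfrac{1}{p}\int_{\Omega}(\log u_{p}^{p})\,\omega\,\mathrm{d}x\bigr)\in(0,1]$, the rescaled function $cu_{p}$ lies in $\mathcal{M}_{p}$, hence $\Lambda_{p}\le c^{p}[u_{p}]_{s,p}^{p}\le c^{p}\Lambda_{p}$, which forces $c=1$; therefore $u_{p}\in\mathcal{M}_{p}$ and $[u_{p}]_{s,p}^{p}=\Lambda_{p}$.

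Finally, computing the first variation along the exponential family $u_{p}\,e^{t\varphi}$ (which stays on the constraint to first order) and identifying the Lagrange multiplier by testing with $\varphi=u_{p}$ leads to the weak equation
\[
\langle(-\Delta_{p})^{s}u_{p},\varphi\rangle\ =\ \Lambda_{p}\int_{\Omega}u_{p}^{-1}\varphi\,\omega\,\mathrm{d}x\qquad\forall\,\varphi\in W_{0}^{s,p}(\Omega),
\]
with positivity $u_{p}>0$ in $\Omega$ following from the strong maximum principle for $(-\Delta_{p})^{s}$, and uniqueness among positive weak solutions following from the equality case of (\ref{sineq}) combined with the hidden convexity of $u\mapsto[u]_{s,p}^{p}$ under the substitution $u\mapsto u^{p}$, both developed in \cite{EPmana18}. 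I expect the passage of the log constraint to the limit to be the principal obstacle, and this is precisely where the hypothesis $\omega\in L^{r}$ enters, supplying the integrability needed to dominate $(\log u_{n})^{+}$.
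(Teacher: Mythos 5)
Your plan is sound in outline, but you should know that the paper itself proves almost none of this on the page: its proof of Theorem \ref{teomana} consists exactly of the finiteness step $\Lambda_{p}\leq\left[ \xi\right] _{s,p}^{p}$ from Proposition \ref{mfin} (which you reproduce correctly) followed by a citation of \cite[Section 4]{EPmana18} for attainment, the Euler--Lagrange equation, positivity and uniqueness. So you are attempting strictly more than the paper does, via the direct method; the skeleton (nonnegative minimizing sequence, weak compactness, lower semicontinuity, closing the logarithmic constraint, rescaling by $c$, hidden convexity for uniqueness) is the standard and correct one for this class of singular problems.

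Two steps, however, have genuine gaps. First, the majorant for $(\log u_{n})^{+}$ does not close as written: $\log t\leq(t^{p}-1)/p$ reduces matters to bounding $\int_{\Omega}u_{n}^{p}\,\omega\,\mathrm{d}x$, and H\"{o}lder against $\omega\in L^{r}$ then requires $\{u_{n}\}$ bounded in $L^{pr'}(\Omega)$ with $r'=r/(r-1)$, which the fractional Sobolev embedding does not supply when $sp<N$ and $r$ is close to $1$; ``uniform $L^{p}$ control'' is not enough. The fix is cheap --- use $\log t\leq(t^{\epsilon}-1)/\epsilon$ with $\epsilon\leq p/r'$, and extract an $n$-independent dominating function from the strong $L^{\epsilon r'}$ convergence along a subsequence --- but as stated the domination fails. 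Second, and more seriously, the derivation of the weak equation is where all of the difficulty of the singular right-hand side lives, and you dispose of it in one sentence. The family $u_{p}e^{t\varphi}$ does not stay on the constraint even to first order unless $\int_{\Omega}\varphi\,\omega\,\mathrm{d}x=0$ (the constraint value is exactly $t\int_{\Omega}\varphi\,\omega\,\mathrm{d}x$), so a compensating factor $\exp\left( -t\int_{\Omega}\varphi\,\omega\,\mathrm{d}x\right) $ is needed; more importantly, this variation only yields the identity for test functions of the form $u_{p}\varphi$ with $\varphi$ bounded, and upgrading it to all $\varphi\in W_{0}^{s,p}(\Omega)$ --- in particular proving that $u_{p}^{-1}\varphi\,\omega\in L^{1}(\Omega)$ for every such $\varphi$, with $u_{p}$ vanishing on $\partial\Omega$ --- is precisely the technical content of \cite[Section 4]{EPmana18} and cannot be waved through. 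Relatedly, invoking a strong maximum principle to get $u_{p}>0$ is mildly circular, since the equation containing $u^{-1}$ only makes sense once a.e.\ positivity is known; the clean order is to note first that $\int_{\Omega}(\log u_{p})\,\omega\,\mathrm{d}x>-\infty$ forces $u_{p}>0$ a.e.\ on $\{\omega>0\}$. Your uniqueness step via the equality case of (\ref{sineq}) and hidden convexity is fine, but observe that it cites the very source the paper leans on for the entire theorem.
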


Summarizing,
\begin{equation}
\left[  u_{p}\right]  _{s,p}^{p}=\Lambda_{p}:=\min\left\{  \left[  u\right]
_{s,p}^{p}:u\in\mathcal{M}_{p}\right\}  \leq\left[  \xi\right]  _{s,p}%
^{p}\quad\forall\,p>1, \label{up1}%
\end{equation}
and $u_{p}$ is the unique function in $W_{0}^{1,p}(\Omega)$ satisfying
\[
u_{p}>0\quad\mathrm{in}\,\Omega\quad\mathrm{and}\quad\left\langle \left(
-\Delta_{p}\right)  ^{s}u_{p},\phi\right\rangle =\Lambda_{p}\int_{\Omega
}\omega(u_{p})^{-1}\phi\mathrm{d}x\quad\forall\,\phi\in W_{0}^{s,p}(\Omega).
\]

We also have%
\[
0<\sqrt[p]{\Lambda_{p}}\leq\frac{\left[  u\right]  _{s,p}}{\exp\left(
\int_{\Omega}(\log\left\vert u\right\vert )\omega\mathrm{d}x\right)  }%
\quad\forall\,u\in W_{0}^{s,p}(\Omega),
\]
since the quotient is homogeneous.

\begin{remark}
\label{-inf}It is worth pointing out that
\begin{equation}
\int_{\Omega}(\log\left\vert u\right\vert )\omega\mathrm{d}x=-\infty
\label{loginf}%
\end{equation}
for any function $u\in L^{\infty}(\Omega)$ whose $\operatorname*{supp}u$ is a
proper subset of $\operatorname*{supp}\omega.$ Indeed, in this case we have%
\[
0\leq\exp\left(  \int_{\Omega}(\log\left\vert u\right\vert )\omega
\mathrm{d}x\right)  =\lim_{t\rightarrow0^{+}}\left(  \int_{\Omega}\left\vert
u\right\vert ^{t}\omega\mathrm{d}x\right)  ^{\frac{1}{t}}\leq\left\Vert
u\right\Vert _{\infty}\lim_{t\rightarrow0^{+}}\left(  \int
_{\operatorname*{supp}\left\vert u\right\vert }\omega\mathrm{d}x\right)
^{\frac{1}{t}}=0.
\]
Thus, if $\omega>0$ almost everywhere in $\Omega$ then (\ref{loginf}) holds
for every $u\in C_{c}^{\infty}(\Omega)\setminus\left\{  0\right\}  .$
\end{remark}

\section{The asymptotic behavior as $p\rightarrow\infty$\label{sec2}}

In this section we assume that the weight $\omega$ satisfies the hypothesis of
Theorem \ref{teomana}. Our goal is to relate the asymptotic behavior (as
$p\rightarrow\infty$) of the pair $\left(  \sqrt[p]{\Lambda_{p}},u_{p}\right)
$ with the problem of minimizing the homogeneous quotient $Q_{s}:C_{0}%
^{0,s}(\overline{\Omega})\setminus\left\{  0\right\}  \rightarrow(0,\infty)$
defined by%
\[
Q_{s}(u):=\frac{\left\vert u\right\vert _{s}}{k(u)}\quad\mathrm{where}\quad
k(u):=\exp\left(  \int_{\Omega}(\log\left\vert u\right\vert )\omega
\mathrm{d}x\right)  .
\]

Note that $k(u)=0$ if, and only if, $u$ satisfies (\ref{loginf}). In
particular, according to Remark \ref{-inf},
\[
\omega>0\quad\mathrm{a.e.}\,\mathrm{in}\,\Omega\Longrightarrow Q_{s}%
(u)=\infty\quad\forall\,u\in C_{c}^{\infty}(\Omega)\setminus\left\{
0\right\}  .
\]

We also observe that
\begin{equation}
0\leq k(u)\leq\int_{\Omega}\left\vert u\right\vert \omega\mathrm{d}%
x<\infty\quad\forall\,u\in C_{0}^{0,s}(\overline{\Omega})\setminus\left\{
0\right\}  , \label{jensen}%
\end{equation}
where the second inequality is consequence of the Jensen's inequality (since
the logarithm is concave):
\begin{equation}
\int_{\Omega}(\log\left\vert u\right\vert )\omega\mathrm{d}x\leq\log\left(
\int_{\Omega}\left\vert u\right\vert \omega\mathrm{d}x\right)  .
\label{jensen1}%
\end{equation}

Now, let us define
\[
\mu_{s}:=\inf_{u\in C_{0}^{0,s}(\overline{\Omega})\setminus\left\{  0\right\}
}Q_{s}(u).
\]

Thanks to the homogeneity of $Q_{s}$ we have%
\[
\mu_{s}=\inf_{u\in\mathcal{M}_{s}}\left\vert u\right\vert _{s}%
\]
where%
\[
\mathcal{M}_{s}:=\left\{  u\in C_{0}^{0,s}(\overline{\Omega}):k(u)=1\right\}
.
\]

Combining (\ref{jensen}) and (\ref{udelta}) we obtain
\[
1\leq\int_{\Omega}\left\vert u\right\vert \omega\mathrm{d}x\leq\left\vert
u\right\vert _{s}\int_{\Omega}\delta^{s}\omega\mathrm{d}x\quad\forall
\,u\in\mathcal{M}_{s},
\]
what yields the following positive lower bound to $\mu_{s}$
\[
\left(  \int_{\Omega}\delta^{s}\omega\mathrm{d}x\right)  ^{-1}\leq\mu_{s}.
\]

In the sequel we show that $\mu_{s}$ is in fact a minimum, attained at a
unique nonnegative function. Before this, let us make an important remark.

\begin{remark}
If $v$ minimizes $\left\vert \cdot\right\vert _{s}$ in $\mathcal{M}_{s}$ the
same holds for $\left\vert v\right\vert ,$ since the function $w=\left\vert
v\right\vert $ belongs to $\mathcal{M}_{s}$ and satisfies $\left\vert
w\right\vert _{s}\leq\left\vert v\right\vert _{s}.$
\end{remark}

\begin{proposition}
\label{minimizer}There exists a unique nonnegative function $v\in
\mathcal{M}_{s}$ such that
\[
\mu_{s}=\left\vert v\right\vert _{s}.
\]

\end{proposition}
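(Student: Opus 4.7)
The plan is to apply the direct method on the minimizing sequence, using the bound $\|u\|_\infty\leq|u|_s\|\delta\|_\infty^s$ from (\ref{udelta}) for compactness, together with a reverse Fatou and rescaling trick to deal with the exponential constraint $k(u)=1$, and then a convexity argument for uniqueness.

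First, I would take a minimizing sequence $\{v_n\}\subset\mathcal{M}_s$ with $|v_n|_s\to\mu_s$; by the remark preceding the proposition I may replace each $v_n$ by $|v_n|$ and assume $v_n\geq 0$. The estimate $\|v_n\|_\infty\leq|v_n|_s\|\delta\|_\infty^s$ provides a uniform sup bound, while the uniform $s$-H\"older bound furnishes equicontinuity on $\overline{\Omega}$. Arzel\`a--Ascoli then gives (up to a subsequence) a limit $v\in C_0^{0,s}(\overline{\Omega})$ with $v\geq 0$ and $v_n\to v$ uniformly, and from the pointwise definition of the seminorm I obtain the lower-semicontinuity bound $|v|_s\leq\liminf_{n}|v_n|_s=\mu_s$.

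The main obstacle is to pass to the limit in the constraint $k(v_n)=1$, since $\log$ is unbounded below and $v$ might vanish at points where the $v_n$ do not. Because $\log v_n\leq\log\|v_n\|_\infty$ is uniformly bounded above and $\log v_n\to\log v$ pointwise (with the convention $\log 0=-\infty$), the reverse Fatou lemma applied to the probability measure $\omega\,\mathrm{d}x$ yields
\[
\int_{\Omega}(\log v)\,\omega\,\mathrm{d}x\;\geq\;\limsup_{n\to\infty}\int_{\Omega}(\log v_n)\,\omega\,\mathrm{d}x\;=\;0,
\]
so $k(v)\geq 1$. This also forces $v\not\equiv 0$, since $v_n\to 0$ uniformly would send $\int(\log v_n)\omega\,\mathrm{d}x\leq\log\|v_n\|_\infty$ to $-\infty$, contradicting the constraint. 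If the strict inequality $k(v)>1$ held, then $\widetilde v:=v/k(v)$ would belong to $\mathcal{M}_s$ with $|\widetilde v|_s=|v|_s/k(v)<\mu_s$, contradicting the definition of $\mu_s$. Hence $k(v)=1$ and $v$ is a nonnegative minimizer.

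For uniqueness, suppose $v_1,v_2\in\mathcal{M}_s$ are both nonnegative minimizers and set $w:=(v_1+v_2)/2$. Convexity of $|\cdot|_s$ gives $|w|_s\leq\mu_s$, and the pointwise concavity of the logarithm gives
\[
\log k(w)\;=\;\int_{\Omega}\log\!\left(\tfrac{v_1+v_2}{2}\right)\omega\,\mathrm{d}x\;\geq\;\tfrac{1}{2}\bigl(\log k(v_1)+\log k(v_2)\bigr)\;=\;0,
\]
so $k(w)\geq 1$. The same rescaling argument as before forces $k(w)=1$, and therefore equality must hold $\omega$-almost everywhere in the concavity inequality for $\log$; by the strict concavity of the logarithm this forces $v_1=v_2$ $\omega$-a.e., and the continuity of $v_1,v_2$ together with the positivity of $\omega$ on a dense subset of $\Omega$ promotes this to $v_1\equiv v_2$ on $\overline{\Omega}$.
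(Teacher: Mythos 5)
Your proof is correct and follows essentially the same architecture as the paper's: direct method with the uniform bound $\|v_n\|_\infty\leq|v_n|_s\|\delta\|_\infty^s$ and compactness of $C_0^{0,s}(\overline{\Omega})\hookrightarrow C_0(\overline{\Omega})$, lower semicontinuity of $|\cdot|_s$ under uniform convergence, the rescaling $v\mapsto v/k(v)$ to force $k(v)=1$, and strict concavity of the logarithm applied to a convex combination for uniqueness. The one genuinely different ingredient is how you pass the constraint $k(v_n)=1$ to the limit: you apply reverse Fatou to $\log v_n$ (legitimate, since $\log v_n$ is uniformly bounded above and $\omega\,\mathrm{d}x$ is a probability measure), whereas the paper avoids handling $\log$ near $0$ altogether by using $k(u)=\inf_{t>0}\bigl(\int_\Omega|u|^t\omega\,\mathrm{d}x\bigr)^{1/t}$, passing to the limit in $n$ for each fixed $t$ by uniform convergence, and only then letting $t\to0^+$. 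Both devices work; yours is shorter, the paper's sidesteps any measure-theoretic care with $-\infty$.

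One caveat on the last line of your uniqueness argument: strict concavity only yields $v_1=v_2$ $\omega$-almost everywhere, and you promote this to $v_1\equiv v_2$ on $\overline{\Omega}$ by invoking positivity of $\omega$ on a dense subset of $\Omega$. That positivity is \emph{not} among the standing hypotheses of this section (only $\omega\geq0$ a.e., $\|\omega\|_{L^1}=1$, $\omega\in L^r$ and condition (\ref{Keps}) are assumed; strict positivity is added only in Section \ref{sec3}), so if $\omega$ vanished on an open subset your final step would not close. To be fair, the paper's own proof makes exactly the same silent jump (it asserts $u=Cv$ from an equality that is only known $\omega$-a.e.), so this is a shared blemish rather than a defect specific to your argument; but you should either state the extra hypothesis on $\operatorname*{supp}\omega$ explicitly or weaken the conclusion to uniqueness $\omega$-a.e.
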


\begin{proof}
Let $\left\{  v_{n}\right\}  _{n\in\mathbb{N}}\subset\mathcal{M}_{s}$ be such
that%
\begin{equation}
\lim_{n\rightarrow\infty}\left\vert v_{n}\right\vert _{s}=\mu_{s}.
\label{mmumin}%
\end{equation}
Since the function $w_{n}=\left\vert v_{n}\right\vert $ belongs to
$\mathcal{M}_{s}$ and satisfies $\left\vert w_{n}\right\vert _{s}%
\leq\left\vert v_{n}\right\vert _{s}$ we can assume that $v_{n}\geq0$ in
$\Omega.$

It follows from (\ref{mmumin}) that $\left\{  v_{n}\right\}  _{n\in\mathbb{N}%
}$ is bounded in $C_{0}^{0,s}(\overline{\Omega}).$ Hence, the compactness of
the embedding $C_{0}^{0,s}(\overline{\Omega})\hookrightarrow C_{0}%
(\overline{\Omega})$ allows us to assume (by renaming a subsequence) that
$\left\{  v_{n}\right\}  _{n\in\mathbb{N}}$ converges uniformly to a function
$v\in C_{0}(\overline{\Omega})$. Of course, $v\geq0$ in $\Omega.$

Letting $n\rightarrow\infty$ in the inequality%
\[
\left\vert v_{n}(x)-v_{n}(y)\right\vert \leq\left\vert v_{n}\right\vert
_{s}\left\vert x-y\right\vert ^{s}\quad\forall\,x,y\in\overline{\Omega}%
\]
and taking (\ref{mmumin}) into account we obtain%
\[
\left\vert v(x)-v(y)\right\vert \leq\mu_{s}\left\vert x-y\right\vert ^{s}%
\quad\forall\,x,y\in\overline{\Omega}.
\]
This implies that $v\in C_{0}^{0,s}(\overline{\Omega})$ and
\begin{equation}
\left\vert v\right\vert _{s}\leq\mu_{s}. \label{mumin3}%
\end{equation}

Thus, to prove that $\mu_{s}=\left\vert v\right\vert _{s}$ it suffices to
verify that $v\in\mathcal{M}_{s}.$ Since
\[
1=k(v_{n})=\lim_{\epsilon\rightarrow0^{+}}\left(  \int_{\Omega}\left\vert
v_{n}\right\vert ^{\epsilon}\omega\mathrm{d}x\right)  ^{\frac{1}{\epsilon}%
}\leq\left(  \int_{\Omega}\left\vert v_{n}\right\vert ^{t}\omega
\mathrm{d}x\right)  ^{\frac{1}{t}}\quad\forall\,t>0
\]
the uniform convergence $v_{n}\rightarrow v$ yields%
\[
1\leq\left(  \int_{\Omega}\left\vert v\right\vert ^{t}\omega\mathrm{d}%
x\right)  ^{\frac{1}{t}}\quad\forall\,t>0.
\]
Hence,
\[
1\leq\lim_{t\rightarrow0^{+}}\left(  \int_{\Omega}\left\vert v\right\vert
^{t}\mathrm{d}x\right)  ^{\frac{1}{t}}=k(v).
\]

Thus, noticing that $(k(v))^{-1}v\in\mathcal{M}_{s}$ and taking (\ref{mumin3})
into account we obtain
\[
\mu_{s}\leq\left\vert (k(v))^{-1}v\right\vert _{s}=(k(v))^{-1}\left\vert
v\right\vert _{s}\leq\left\vert v\right\vert _{s}\leq\mu_{s}.
\]
Therefore, $k(v)=1,$ $v\in\mathcal{M}_{s}$ and $\left\vert v\right\vert
_{s}=\mu_{s}.$

Now, let $u\in\mathcal{M}_{s}$ be a nonnegative minimizer of $\left\vert
\cdot\right\vert _{s}$ and consider the convex combination
\[
w:=\theta u+(1-\theta)v\quad\mathrm{with}\quad0<\theta<1.
\]
Since the logarithm is a concave function, we have%
\begin{align*}
\int_{\Omega}(\log w)\omega\mathrm{d}x  &  \geq\int_{\Omega}(\theta
\log(u)+(1-\theta)\log(v))\omega\mathrm{d}x\\
&  =\theta\int_{\Omega}(\log u)\omega\mathrm{d}x+(1-\theta)\int_{\Omega}(\log
v)\omega\mathrm{d}x=0.
\end{align*}
This implies that $c^{-1}w\in\mathcal{M}_{s}$ where $c:=k(w)\geq1.$

Hence,
\[
\mu_{s}\leq c^{-1}\left\vert w\right\vert _{s}\leq\left\vert w\right\vert
_{s}\leq\theta\left\vert u\right\vert _{s}+(1-\theta)\left\vert v\right\vert
_{s}=\theta\mu_{s}+(1-\theta)\mu_{s}=\mu_{s}.
\]
It follows that $c=1$ and the convex combination $w$ minimizes $\left\vert
\cdot\right\vert _{s}$ in $\mathcal{M}_{s}.$ Consequently,
\[
0=\int_{\Omega}\left[  \log(\theta u+(1-\theta)v)\right]  \omega
\mathrm{d}x\geq\int_{\Omega}\left[  \theta\log(u)+(1-\theta)\log(v)\right]
\omega\mathrm{d}x=0.
\]
Since the concavity of the logarithm is strict, one must have $u=Cv$ for some
positive constant $C.$ Taking account that $1=k(u)=Ck(v)=C,$ we have $u=v.$
\end{proof}

From now on, $v_{s}\in\mathcal{M}_{s}$ denotes the only nonnegative minimizer
of $\left\vert \cdot\right\vert _{s}$ on $\mathcal{M}_{s},$ given by
Proposition \ref{minimizer}. The main result of this section, proved in the
sequence, shows that if $p_{n}\rightarrow\infty$ then a subsequence of
$\left\{  u_{p_{n}}\right\}  _{n\in\mathbb{N}}$ converges uniformly to a
scalar multiple of $v_{s,}$ say $u_{\infty}=k_{\infty}v_{s}$ where $k_{\infty
}\geq1.$

In the next section (see (\ref{+-vs})) we show that $u_{\infty}$ is strictly
positive in $\Omega,$ implying thus that $-v_{s}$ and $v_{s}$ are the only
minimizers of $\left\vert \cdot\right\vert _{s}$ on $\mathcal{M}_{s}.$ As
consequence, the minimizers of $Q_{s}$ on $C_{0}^{0,s}(\overline{\Omega
})\setminus\left\{  0\right\}  $ are precisely the scalar multiples of $v_{s}$
(or, equivalently, the scalar multiples of $u_{\infty}$). Further, we derive
an equation satisfied by $v_{s}$ and $\mu_{s}$ in the viscosity sense (see
Corollary \ref{viscv}).

\begin{lemma}
\label{sfrac}Let $u\in C_{0}^{0,s}(\overline{\Omega})$ be extended as zero
outside $\Omega.$ If $u\in W^{s,q}(\Omega)$ for some $q>1,$ then $u\in
W_{0}^{s,p}(\Omega)$ for all $p\geq q$ and%
\begin{equation}
\lim_{p\rightarrow\infty}\left[  u\right]  _{s,p}=\left\vert u\right\vert
_{s}. \label{ssfrac}%
\end{equation}

\end{lemma}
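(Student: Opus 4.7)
The plan is to sandwich $[u]_{s,p}$ between two quantities that both tend to $|u|_s$ as $p\to\infty$: an interpolation-type upper bound obtained from $[u]_{s,q}$, and a local lower bound near a near-supremum pair of the H\"older quotient. The preliminary observation is that the zero extension of $u$ remains globally $s$-H\"older on $\mathbb{R}^N$: the bound on $\overline\Omega\times\overline\Omega$ is given, while for $y\in\Omega$ and $x\in\mathbb{R}^N\setminus\Omega$ one has $\delta(y)\leq|x-y|$ (since $x$ lies outside $\Omega$), so by (\ref{udelta}) $|u(y)|\leq|u|_s\delta(y)^s\leq|u|_s|x-y|^s$. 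Consequently $|u(x)-u(y)|\leq|u|_s|x-y|^s$ for all $x,y\in\mathbb{R}^N$.

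For the upper bound, I would first promote $u\in W^{s,q}(\Omega)$ to $u\in W_0^{s,q}(\Omega)$. The only extra integral to control is the cross term $2\int_\Omega\int_{\mathbb{R}^N\setminus\Omega}|u(x)|^q|x-y|^{-N-sq}\mathrm{d}y\,\mathrm{d}x$; using the elementary computation $\int_{|y-x|\geq r}|y-x|^{-N-sq}\mathrm{d}y\leq c_N(sq)^{-1}r^{-sq}$ with $r=\delta(x)$, together with $|u(x)|\leq|u|_s\delta(x)^s$, this cross term is bounded by $2c_N(sq)^{-1}|u|_s^q|\Omega|<\infty$, so $[u]_{s,q}<\infty$. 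For any $p\geq q$, the pointwise inequality from the preliminary step gives $|u(x)-u(y)|^{p-q}\leq|u|_s^{p-q}|x-y|^{s(p-q)}$ and therefore
\[
\frac{|u(x)-u(y)|^p}{|x-y|^{N+sp}}\leq|u|_s^{p-q}\frac{|u(x)-u(y)|^q}{|x-y|^{N+sq}};
\]
integrating over $\mathbb{R}^N\times\mathbb{R}^N$ yields $[u]_{s,p}^p\leq|u|_s^{p-q}[u]_{s,q}^q$, which proves $u\in W_0^{s,p}(\Omega)$. Taking $p$-th roots, $[u]_{s,p}\leq|u|_s^{1-q/p}[u]_{s,q}^{q/p}\to|u|_s$, so $\limsup_{p\to\infty}[u]_{s,p}\leq|u|_s$.

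For the matching lower bound, fix $0<\alpha<|u|_s$; by the definition of $|u|_s$ there exist $x_0\neq y_0$ in $\overline\Omega$ with $|u(x_0)-u(y_0)|>\alpha|x_0-y_0|^s$. By continuity of $\phi(x,y):=|u(x)-u(y)|/|x-y|^s$ (with $u$ understood as its zero extension) off the diagonal, there is an open ball $B\subset(\mathbb{R}^N\times\mathbb{R}^N)\setminus\{x=y\}$ around $(x_0,y_0)$ on which $\phi>\alpha$ and $|x-y|$ is bounded above and below by positive constants; hence
\[
[u]_{s,p}^p\geq\int_B\phi(x,y)^p\,\frac{\mathrm{d}x\,\mathrm{d}y}{|x-y|^N}\geq\alpha^p C,
\]
where $C:=\int_B|x-y|^{-N}\mathrm{d}x\,\mathrm{d}y\in(0,\infty)$ is independent of $p$. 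Taking $p$-th roots and letting $p\to\infty$, then $\alpha\nearrow|u|_s$, yields $\liminf_{p\to\infty}[u]_{s,p}\geq|u|_s$, which combined with the previous paragraph gives (\ref{ssfrac}). The main obstacle is exactly the local-to-global gap between $W^{s,q}(\Omega)$ (seminorm only over $\Omega\times\Omega$) and $W_0^{s,p}(\Omega)$ (seminorm over $\mathbb{R}^N\times\mathbb{R}^N$); the hypothesis $u\in C_0^{0,s}(\overline\Omega)$ is what closes it, via the boundary-vanishing estimate (\ref{udelta}) that controls the cross term uniformly in $p$.
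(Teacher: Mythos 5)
Your proof is correct. The upper bound is essentially the paper's: the same global H\"older estimate for the zero extension followed by the factorization $|u(x)-u(y)|^{p}=|u(x)-u(y)|^{p-q}\cdot|u(x)-u(y)|^{q}$, giving $[u]_{s,p}^{p}\leq|u|_{s}^{p-q}[u]_{s,q}^{q}$. (You are in fact slightly more careful than the paper here: you explicitly verify that the \emph{full} Gagliardo seminorm over $\mathbb{R}^{N}\times\mathbb{R}^{N}$ is finite by bounding the cross term $\int_{\Omega}\int_{\mathbb{R}^{N}\setminus\Omega}$ via (\ref{udelta}) and the tail integral $\int_{|y-x|\geq\delta(x)}|y-x|^{-N-sq}\,\mathrm{d}y$, a point the paper passes over when it writes $[u]_{s,q}^{q}$ on the right-hand side while only assuming $u\in W^{s,q}(\Omega)$.) Your lower bound, however, is a genuinely different and more elementary route: you localize at a near-extremal pair $(x_{0},y_{0})$ of the H\"older quotient and use $[u]_{s,p}^{p}\geq\alpha^{p}\int_{B}|x-y|^{-N}\,\mathrm{d}x\,\mathrm{d}y$, whereas the paper combines Fatou's lemma with H\"older's inequality to get $\bigl(\int_{\Omega}\int_{\Omega}\phi^{q}\bigr)^{1/q}\leq|\Omega|^{2/q}\liminf_{p}[u]_{s,p}$ and then sends $q\to\infty$ using the convergence of $L^{q}$-means of the quotient to its supremum. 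Your argument is shorter and self-contained for a single fixed $u$; the paper's scheme has the advantage that it transfers verbatim to the varying family $\{u_{p_{n}}\}$ in the proof of Theorem \ref{conv1}, where a fixed near-extremal pair would not be available because the function changes with $n$. (Only cosmetic caveat: your choice $0<\alpha<|u|_{s}$ tacitly assumes $|u|_{s}>0$; the case $u\equiv0$ is trivial and worth a word.)
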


\begin{proof}
First, note that the inequality
\[
\left\vert u(x)-u(y)\right\vert \leq\left\vert u\right\vert _{s}\left\vert
x-y\right\vert ^{s}%
\]
is valid for all $x,y\in\mathbb{R}^{N},$ not only for those $x,y\in
\overline{\Omega}.$ In fact, this is obvious when $x,y\in\mathbb{R}%
^{N}\setminus\overline{\Omega}.$ Now, if $x\in\Omega$ and $y\in\mathbb{R}%
^{N}\setminus\overline{\Omega}$ then take $y_{1}\in\partial\Omega$ such that
$\left\vert x-y_{1}\right\vert \leq\left\vert x-y\right\vert $ (such $y_{1}$
can be taken on the straight line connecting $x$ to $y$). Since $u(y)=u(y_{1}%
)=0,$ we have
\[
\left\vert u(x)-u(y)\right\vert =\left\vert u(x)\right\vert =\left\vert
u(x)-u(y_{1})\right\vert \leq\left\vert u\right\vert _{s}\left\vert
x-y_{1}\right\vert ^{s}\leq\left\vert u\right\vert _{s}\left\vert
x-y\right\vert ^{s}.
\]

For each $p>q$ we have
\[
\left[  u\right]  _{s,p}^{p}=\int_{\mathbb{R}^{N}}\int_{\mathbb{R}^{N}}%
\frac{\left\vert u(x)-u(y)\right\vert ^{p-q}}{\left\vert x-y\right\vert
^{s(p-q)}}\frac{\left\vert u(x)-u(y)\right\vert ^{q}}{\left\vert
x-y\right\vert ^{N+sq}}\mathrm{d}x\mathrm{d}y\leq(\left\vert u\right\vert
_{s})^{(p-q)}\left[  u\right]  _{s,q}^{q}.
\]
Thus, $u\in W_{0}^{s,p}(\Omega)$ and
\begin{equation}
\limsup_{p\rightarrow\infty}\left[  u\right]  _{s,p}\leq\lim_{p\rightarrow
\infty}\left\vert u\right\vert _{s}^{(p-q)/p}\left[  u\right]  _{s,q}%
^{q/p}=\left\vert u\right\vert _{s}. \label{up5}%
\end{equation}

Now, noticing that (by Fatou's lemma)
\[
\int_{\Omega}\int_{\Omega}\left(  \frac{\left\vert u(x)-u(y)\right\vert
}{\left\vert x-y\right\vert ^{s}}\right)  ^{q}\mathrm{d}x\mathrm{d}%
y\leq\liminf_{p\rightarrow\infty}\int_{\Omega}\int_{\Omega}\left(
\frac{\left\vert u(x)-u(y)\right\vert }{\left\vert x-y\right\vert ^{\frac
{N}{p}+s}}\right)  ^{q}\mathrm{d}x\mathrm{d}y
\]
and (by H\"{o}lder's inequality)
\begin{align*}
\int_{\Omega}\int_{\Omega}\left(  \frac{\left\vert u(x)-u(y)\right\vert
}{\left\vert x-y\right\vert ^{\frac{N}{p}+s}}\right)  ^{q}\mathrm{d}%
x\mathrm{d}y  &  \leq\left\vert \Omega\right\vert ^{2(1-\frac{q}{p})}\left(
\int_{\Omega}\int_{\Omega}\left(  \frac{\left\vert u(x)-u(y)\right\vert
}{\left\vert x-y\right\vert ^{\frac{N}{p}+s}}\right)  ^{p}\mathrm{d}%
x\mathrm{d}y\right)  ^{\frac{q}{p}}\\
&  \leq\left\vert \Omega\right\vert ^{2(1-\frac{q}{p})}\left[  u\right]
_{s,p}^{q},
\end{align*}
we obtain
\[
\left(  \int_{\Omega}\int_{\Omega}\left(  \frac{\left\vert
u(x)-u(y)\right\vert }{\left\vert x-y\right\vert ^{s}}\right)  ^{q}%
\mathrm{d}x\mathrm{d}y\right)  ^{\frac{1}{q}}\leq\left\vert \Omega\right\vert
^{2/q}\liminf_{p\rightarrow\infty}\left[  u\right]  _{s,p}.
\]

Hence, taking into account that%
\[
\left\vert u\right\vert _{s}=\lim_{q\rightarrow\infty}\left(  \int_{\Omega
}\int_{\Omega}\left(  \frac{\left\vert u(x)-u(y)\right\vert }{\left\vert
x-y\right\vert ^{s}}\right)  ^{q}\mathrm{d}x\mathrm{d}y\right)  ^{\frac{1}{q}}%
\]
we arrive at%
\[
\left\vert u\right\vert _{s}\leq\lim_{q\rightarrow\infty}\left\vert
\Omega\right\vert ^{2/q}\left(  \liminf_{p\rightarrow\infty}\left[  u\right]
_{s,p}\right)  =\liminf_{p\rightarrow\infty}\left[  u\right]  _{s,p}.
\]

This estimate combined with (\ref{up5}) leads us to (\ref{ssfrac}).
\end{proof}

It is known (see \cite[Theorem 8.2]{Guide}) that if $p>\dfrac{N}{s}$ then
there exists of a positive constant $C$ such that
\begin{equation}
\left\Vert u\right\Vert _{C^{0,\beta}(\overline{\Omega})}\leq C\left[
u\right]  _{s,p}\quad\forall\,u\in W_{0}^{s,p}(\Omega), \label{morrey}%
\end{equation}
where $\beta:=s-\dfrac{N}{p}\in(0,1).$ As pointed out in \cite[Remark
2.2]{FPL} the constant $C$ in (\ref{morrey}) can be chosen uniform with
respect to $p.$

We remark that the family of positive numbers $\left\{  \sqrt[p]{\Lambda_{p}%
}\right\}  _{p>1}$ is bounded. Indeed, combining (\ref{Lamfin}) with the
previous lemma we obtain
\[
\limsup_{p\rightarrow\infty}\sqrt[p]{\Lambda_{p}}\leq\left\vert \xi\right\vert
_{s}.
\]

The next lemma, where $\operatorname{Id}$ stands for the identity function, is
extracted of the proof of \cite[Lemma 3.2]{LiChu}. It helps us to overcome the
fact that $C_{c}^{\infty}(\Omega)$ is not dense in $C_{0}^{0,s}(\overline
{\Omega}).$

\begin{lemma}
[{see \cite[Lemma 3.2]{LiChu}}]\label{litchu}Let $\Omega\subset\mathbb{R}^{N}$
be a Lipschitz bounded domain. There exist $\phi\in C_{c}^{\infty}%
(\mathbb{R}^{N},\mathbb{R}^{N})$ and $0<\tau_{0}<(\left\vert \phi\right\vert
_{1})^{-1}$ such that, for each $0\leq\tau\leq\tau_{0},$ the map
\[
\Phi_{\tau}:=\operatorname{Id}+\tau\phi:\mathbb{R}^{N}\rightarrow
\mathbb{R}^{N}%
\]
is a diffeomorphism satisfying

\begin{enumerate}
\item $\Phi_{\tau}(\overline{\Omega})\subset\subset\Omega,$

\item $\Phi_{\tau}\rightarrow\operatorname{Id}$ and $(\Phi_{\tau}%
)^{-1}\rightarrow\operatorname{Id}$ as $\tau\rightarrow0^{+}$ uniformly on
$\mathbb{R}^{N}$,

\item $\left\vert (\Phi_{\tau})^{-1}(x)-(\Phi_{\tau})^{-1}(y)\right\vert
\leq\dfrac{\left\vert x-y\right\vert }{1-\tau\left\vert \phi\right\vert _{1}%
}.$
\end{enumerate}
\end{lemma}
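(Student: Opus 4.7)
The plan is to construct $\phi$ as a smooth, compactly supported vector field that pushes every point of $\overline{\Omega}$ strictly into the interior, and then deduce items 1--3 from direct estimates. The geometric input is the uniform interior cone property of Lipschitz bounded domains: for each $x_{0}\in\partial\Omega$ there exist a neighborhood $U_{x_{0}}$ and a unit vector $v_{x_{0}}$ such that $x+tv_{x_{0}}\in\Omega$ for every $x\in\overline{\Omega}\cap U_{x_{0}}$ and every $t\in(0,t_{x_{0}}]$. By compactness of $\partial\Omega$ one extracts a finite subcover $\{U_{i}\}$ with directions $\{v_{i}\}$, takes a smooth partition of unity $\{\psi_{i}\}$ subordinate to it on a tubular neighborhood $V$ of $\partial\Omega$, and multiplies by a smooth cutoff $\chi$ with compact support that equals $1$ on $V$. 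Setting $\phi(x):=\chi(x)\sum_{i}\psi_{i}(x)v_{i}$ produces $\phi\in C_{c}^{\infty}(\mathbb{R}^{N},\mathbb{R}^{N})$ whose value at any $x\in\partial\Omega$ is a positive convex combination of inward-pointing vectors, so $x+t\phi(x)\in\Omega$ for all sufficiently small $t>0$.

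For item 3, writing $x_{j}:=\Phi_{\tau}^{-1}(y_{j})$ the identity $x_{j}=y_{j}-\tau\phi(x_{j})$ yields
\begin{equation*}
|x_{1}-x_{2}|\leq|y_{1}-y_{2}|+\tau|\phi(x_{1})-\phi(x_{2})|\leq|y_{1}-y_{2}|+\tau|\phi|_{1}|x_{1}-x_{2}|,
\end{equation*}
hence $(1-\tau|\phi|_{1})|x_{1}-x_{2}|\leq|y_{1}-y_{2}|$. This simultaneously delivers injectivity of $\Phi_{\tau}$ for $\tau<|\phi|_{1}^{-1}$ and the stated Lipschitz estimate for $\Phi_{\tau}^{-1}$. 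Combined with the Neumann-series invertibility of $D\Phi_{\tau}=I+\tau D\phi$ (since $\|\tau D\phi\|_{\mathrm{op}}\leq\tau|\phi|_{1}<1$) and the properness of $\Phi_{\tau}$ (which differs from the identity by a bounded map), a standard covering-map argument in $\mathbb{R}^{N}$ upgrades $\Phi_{\tau}$ to a global diffeomorphism.

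Item 2 is immediate: $\Phi_{\tau}-\operatorname{Id}=\tau\phi$ is bounded uniformly by $\tau\|\phi\|_{\infty}$, and the relation $\Phi_{\tau}^{-1}(y)-y=-\tau\phi(\Phi_{\tau}^{-1}(y))$ gives the same bound for the inverse. For item 1, split $\overline{\Omega}$ into $K_{\rho}:=\{x\in\overline{\Omega}:\delta(x)\geq\rho\}$ and the boundary layer $\overline{\Omega}\setminus K_{\rho}$ for small $\rho>0$: on $K_{\rho}$ the estimate $\|\Phi_{\tau}-\operatorname{Id}\|_{\infty}<\rho$ (for $\tau$ small) gives $\Phi_{\tau}(K_{\rho})\subset\Omega$, while on the boundary layer the inward-pointing property of $\phi$ (uniform by compactness) produces $\Phi_{\tau}(x)\in\Omega$; both containments retain positive distance to $\partial\Omega$, so after possibly shrinking $\tau_{0}$ one obtains $\Phi_{\tau}(\overline{\Omega})\subset\subset\Omega$.

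The main obstacle is the construction step: one must verify that the partition-of-unity average remains \emph{uniformly} inward-pointing at every boundary point, which reduces to choosing the local cone openings bounded below independently of $x_{0}$. This uniformity is exactly what the Lipschitz hypothesis on $\partial\Omega$ provides, and is the only place it is used; the remaining items reduce to one-line applications of the reverse triangle inequality and of $\phi$ being bounded.
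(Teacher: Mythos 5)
The paper does not prove this statement at all: it is quoted directly from \cite[Lemma 3.2]{LiChu} (the text preceding the lemma says it is ``extracted of the proof of'' that result), so the only meaningful comparison is between your argument and the reference. Your architecture is the standard one and is essentially what Littig--Schuricht do: build an inward-pointing field from the local cone structure of the Lipschitz boundary via a partition of unity, and then treat $\Phi_\tau=\operatorname{Id}+\tau\phi$ as a Lipschitz-small perturbation of the identity. Items 2 and 3 and the diffeomorphism property are handled correctly and completely: the estimate $(1-\tau|\phi|_1)|x_1-x_2|\leq|y_1-y_2|$ from $x_j=y_j-\tau\phi(x_j)$ gives both injectivity and the inverse Lipschitz bound, invertibility of $I+\tau D\phi$ plus properness (or, more directly, the Banach fixed point theorem applied to $x\mapsto y-\tau\phi(x)$) gives global invertibility, and $\Phi_\tau^{-1}(y)-y=-\tau\phi(\Phi_\tau^{-1}(y))$ gives the uniform convergence of the inverse.

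The one step that is genuinely not closed is the one you yourself flag as the main obstacle: that $\phi(x)=\sum_i\psi_i(x)v_i$ is \emph{uniformly} inward-pointing near $\partial\Omega$. Your proposed reduction --- choose the local cone openings bounded below independently of $x_0$ --- is not sufficient: at a point $x\in U_i\cap U_j$ the directions $v_i$ and $v_j$ each generate a truncated cone at $x$ contained in $\Omega$, but the union of two cones is not convex, so nothing so far prevents $\lambda v_i+(1-\lambda)v_j$ from exiting $\Omega$, or from nearly cancelling. The correct way to close this is to observe that the set of \emph{uniformly} admissible directions on a chart is convex: after shrinking to $U'\subset\subset U$ and taking $t_0$ small enough that $x+t\lambda v\in U$ for $x\in U'$ and $0<t\le t_0$, if $z+tv\in\Omega$ and $z+tw\in\Omega$ for every $z\in\overline{\Omega}\cap U$ and $0<t\le t_0$, then writing $x+t(\lambda v+(1-\lambda)w)=(x+t\lambda v)+t(1-\lambda)w$ and applying the hypothesis twice (first at $x$ in direction $v$, then at the intermediate point in direction $w$) shows the convex combination is again admissible. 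Equivalently, one works in the Lipschitz graph coordinates of a single chart, where the uniformly admissible set is the convex salient cone $\{v:v_N>L|v'|\}$; this also yields the lower bound $|\phi(x)|\ge c>0$ and the quantitative estimate $\delta(x+\tau\phi(x))\ge c\,\tau$ on the boundary layer, which you need (but only assert) to conclude the \emph{compact} containment $\Phi_\tau(\overline{\Omega})\subset\subset\Omega$ in item 1. With that insertion your proof is complete.
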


\begin{lemma}
\label{dens}Let $u\in C_{0}^{0,s}(\overline{\Omega})$ be a nonnegative
function extended as zero outside $\Omega.$ There exists a sequence of
nonnegative functions $\left\{  u_{k}\right\}  _{k\in\mathbf{N}}\subset
C_{0}^{0,s}(\overline{\Omega})\cap W_{0}^{s,p}(\Omega),$ for all $p>1,$
converging uniformly to $u$ in $\overline{\Omega}$ and such that%
\[
\limsup_{k\rightarrow\infty}\left\vert u_{k}\right\vert _{s}\leq\left\vert
u\right\vert _{s}.
\]

\end{lemma}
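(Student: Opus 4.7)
The plan is to produce $u_k$ by a two-step regularization: first apply the diffeomorphism $\Phi_{\tau}$ from Lemma~\ref{litchu} to push the support of $u$ strictly inside $\Omega$, and then mollify the resulting function to obtain a $C_{c}^{\infty}(\Omega)$ approximation. Neither operation should blow up the $s$-H\"older seminorm in the limit, and both preserve nonnegativity.

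For each $0<\tau\leq\tau_0$, I will set
\[
v_{\tau}(x):=u\bigl(\Phi_{\tau}^{-1}(x)\bigr),\quad x\in\mathbb{R}^{N},
\]
where $u$ is understood to be extended as zero outside $\Omega$. Since $\Phi_{\tau}(\overline{\Omega})\subset\subset\Omega$ (item 1 of Lemma~\ref{litchu}) and $u$ vanishes outside $\overline{\Omega}$, the support of $v_{\tau}$ is contained in $\Phi_{\tau}(\overline{\Omega})\subset\subset\Omega$; in particular $v_{\tau}\in C_{0}^{0,s}(\overline{\Omega})$ and $v_{\tau}\geq 0$. Using the extended H\"older estimate on $\mathbb{R}^{N}$ (which is valid for $u\in C_{0}^{0,s}(\overline{\Omega})$ extended by zero, as already observed in the proof of Lemma~\ref{sfrac}) together with item 3 of Lemma~\ref{litchu}, I obtain
\[
|v_{\tau}(x)-v_{\tau}(y)|\leq|u|_{s}\bigl|\Phi_{\tau}^{-1}(x)-\Phi_{\tau}^{-1}(y)\bigr|^{s}\leq\frac{|u|_{s}}{(1-\tau|\phi|_{1})^{s}}\,|x-y|^{s},
\]
so $|v_{\tau}|_{s}\to|u|_{s}$ as $\tau\to0^{+}$. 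Uniform convergence $v_{\tau}\to u$ follows from $\|v_{\tau}-u\|_{\infty}\leq|u|_{s}\|\Phi_{\tau}^{-1}-\operatorname{Id}\|_{\infty}^{s}$ together with item 2 of Lemma~\ref{litchu}.

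Next, fix a standard nonnegative mollifier $\rho_{\varepsilon}$. Since $\operatorname{supp}(v_{\tau})\subset\subset\Omega$, for $\varepsilon>0$ smaller than the distance from $\operatorname{supp}(v_{\tau})$ to $\partial\Omega$ the function $v_{\tau}\ast\rho_{\varepsilon}$ lies in $C_{c}^{\infty}(\Omega)\subseteq W_{0}^{s,p}(\Omega)$ for every $p>1$, and it is nonnegative. A routine estimate inside the convolution integral gives
\[
|v_{\tau}\ast\rho_{\varepsilon}|_{s}\leq|v_{\tau}|_{s},
\]
and $v_{\tau}\ast\rho_{\varepsilon}\to v_{\tau}$ uniformly as $\varepsilon\to 0^{+}$ by uniform continuity of $v_{\tau}$. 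A standard diagonal selection $u_{k}:=v_{\tau_{k}}\ast\rho_{\varepsilon_{k}}$ with $\tau_{k},\varepsilon_{k}\to 0^{+}$ suitably chosen then yields the required sequence, since
\[
\|u_{k}-u\|_{\infty}\leq\|v_{\tau_{k}}\ast\rho_{\varepsilon_{k}}-v_{\tau_{k}}\|_{\infty}+\|v_{\tau_{k}}-u\|_{\infty}\longrightarrow 0
\]
and
\[
|u_{k}|_{s}\leq|v_{\tau_{k}}|_{s}\leq\frac{|u|_{s}}{(1-\tau_{k}|\phi|_{1})^{s}}\longrightarrow|u|_{s}.
\]

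The only delicate point is that elements of $C_{0}^{0,s}(\overline{\Omega})$ cannot be approximated in $|\cdot|_{s}$ by $C_{c}^{\infty}(\Omega)$ (this is precisely the obstruction that motivates Lemma~\ref{dens}); this is exactly why one must rely on the inward push $\Phi_{\tau}$, whose inverse is uniformly Lipschitz with constant $(1-\tau|\phi|_{1})^{-1}\to 1$, so that the seminorm blow-up coming from the composition with $\Phi_{\tau}^{-1}$ is controlled. Once this structural observation is in place, the rest is bookkeeping.
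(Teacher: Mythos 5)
Your proposal is correct and follows essentially the same route as the paper: both construct the approximants as mollifications of $u\circ\Phi_{\tau}^{-1}$ (with $\tau=1/k$), use item 3 of Lemma~\ref{litchu} to control the $s$-H\"older seminorm up to the factor $(1-\tau|\phi|_{1})^{-s}\to1$, and use items 1--2 to get compact support inside $\Omega$ and uniform convergence. Your only organizational difference is estimating the two steps (composition, then convolution) separately and proving uniform convergence quantitatively rather than by the paper's compactness-plus-identification argument; this is a harmless, if anything slightly cleaner, variant.
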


\begin{proof}
For each $k\in\mathbb{N}$ let $\Psi_{k}$ denote the inverse of $\Phi_{1/k},$
given by Lemma \ref{litchu}, and set%
\[
\Omega_{k}:=\Phi_{1/k}(\overline{\Omega}).
\]
Since $\Omega_{k}\subset\subset\Omega$ there exists $U_{k},$ a subdomain of
$\Omega,$ such that
\[
\overline{\Omega_{k}}\subset U_{k}\subset\overline{U_{k}}\subset\Omega.
\]

Let $\eta\in C^{\infty}(\mathbb{R}^{N})$ be a standard convolution kernel:
$\eta(z)>0$ if $\left\vert z\right\vert <1,$ $\eta(z)=0$ if $\left\vert
z\right\vert \geq1$ and $\int_{\left\vert z\right\vert \leq1}\phi
(z)\mathrm{d}z=1.$

Define the function%
\[
u_{k}=(u\circ\Psi_{k})\ast\eta_{k}\in C^{\infty}(\mathbb{R}^{N}),
\]
where
\[
\eta_{k}(x):=(\epsilon_{k})^{-N}\eta(\frac{x}{\epsilon_{k}}),\quad
x\in\mathbb{R}^{N}%
\]
and $\epsilon_{k}<\operatorname{dist}(\Omega_{k},\partial U_{k}).$ Note that
$\epsilon_{k}\rightarrow0.$

Since
\[
B(x,\epsilon_{k})\subset\mathbb{R}^{N}\setminus\Omega_{k}\quad\forall
\,x\in\mathbb{R}^{N}\setminus U_{k},
\]
we have%
\[
\Psi_{k}(B(x,\epsilon_{k}))\subset\mathbb{R}^{N}\setminus\Omega\quad
\forall\,x\in\mathbb{R}^{N}\setminus U_{k}.
\]
Hence, observing that
\[
u_{k}(x)=\int_{\mathbb{R}^{N}}\eta_{k}(x-z)u(\Psi_{k}(z))\mathrm{d}%
z=\int_{B(0,1)}\eta(z)u(\psi_{k}(x-\epsilon_{k}z))\mathrm{d}z\quad
\forall\,x\in\mathbb{R}^{N}%
\]
and that
\[
\left\vert x-\epsilon_{k}z-x\right\vert \leq\epsilon_{k}\quad\forall\,z\in
B(0,1)
\]
we conclude that
\[
u_{k}(x)=0\quad\forall\,x\in\mathbb{R}^{N}\setminus U_{k}.
\]
Therefore, $u_{k}\in C_{c}^{\infty}(\Omega)\subset W_{0}^{1,p}(\Omega)$ for
all $p>1.$

Now, let $x,y\in\overline{\Omega}$ be fixed. According to item $3$ of Lemma
\ref{litchu}
\begin{align*}
\left\vert u_{k}(x)-u_{k}(y)\right\vert  &  \leq\int_{B(0,1)}\eta(z)\left\vert
u(\Psi_{k}(x-\epsilon_{k}z))-u(\Psi_{k}(y-\epsilon_{k}z))\right\vert
\mathrm{d}z\\
&  \leq\left\vert u\right\vert _{s}\int_{B(0,1)}\eta(z)\left\vert \Psi
_{k}(x-\epsilon_{k}z)-\Psi_{k}(y-\epsilon_{k}z))\right\vert ^{s}\mathrm{d}z\\
&  \leq\dfrac{\left\vert u\right\vert _{s}}{(1-(1/k)\left\vert \phi\right\vert
_{1})^{s}}\int_{B(0,1)}\eta(z)\left\vert x-y\right\vert ^{s}\mathrm{d}z\\
&  =\dfrac{\left\vert u\right\vert _{s}}{(1-(1/k)\left\vert \phi\right\vert
_{1})^{s}}\left\vert x-y\right\vert ^{s}.
\end{align*}

It follows that $u_{k}\in C_{0}^{0,s}(\overline{\Omega})$ and
\[
\limsup_{k\rightarrow\infty}\left\vert u_{k}\right\vert _{s}\leq
\lim_{k\rightarrow\infty}\dfrac{\left\vert u\right\vert _{s}}%
{(1-(1/k)\left\vert \phi\right\vert _{1})^{s}}=\left\vert u\right\vert _{s}.
\]
Consequently, up to a subsequence, $u_{k}\rightarrow\widetilde{u}\in
C(\overline{\Omega})$ uniformly in $\overline{\Omega}.$ Hence, $\widetilde
{u}=u$ since item $2$ of Lemma \ref{litchu} implies that%
\[
\lim_{k\rightarrow\infty}u_{k}(x)=\int_{B(0,1)}\eta(z)u(\lim_{k\rightarrow
\infty}\Psi_{k}(x-\epsilon_{k}z))\mathrm{d}z=u(x)\int_{B(0,1)}\eta
(z)\mathrm{d}z=u(x).
\]

\end{proof}

\begin{theorem}
\label{conv1}Let $p_{n}\rightarrow\infty.$ Up to a subsequence, $\left\{
u_{p_{n}}\right\}  _{n\in\mathbb{N}}$ converges uniformly to a nonnegative
function $u_{\infty}\in C_{0}^{0,s}(\overline{\Omega})$ such that
\[
\left\vert u_{\infty}\right\vert _{s}=\lim_{n\rightarrow\infty}\sqrt[p_{n}%
]{\Lambda_{p_{n}}}.
\]
Furthermore,%
\begin{equation}
v_{s}=(k_{\infty})^{-1}u_{\infty} \label{vsu}%
\end{equation}
where
\begin{equation}
k_{\infty}:=k(u_{\infty})=\exp\left(  \int_{\Omega}(\log\left\vert u_{\infty
}\right\vert )\omega\mathrm{d}x\right)  \geq1. \label{kinfty}%
\end{equation}

\end{theorem}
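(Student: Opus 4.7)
The plan has four stages: (a) extract a uniform subsequential limit $u_\infty$ by compactness, (b) bound $|u_\infty|_s$ from below by $\liminf_n\sqrt[p_n]{\Lambda_{p_n}}$ via a H\"older/Fatou argument and verify $k_\infty\ge 1$ by upper semicontinuity, (c) obtain the matching upper bound $\limsup_n\sqrt[p_n]{\Lambda_{p_n}}\le|u_\infty|_s$ via a normalized approximation of $u_\infty$, and (d) identify $u_\infty/k_\infty$ with $v_s$ by minimality and uniqueness.

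For (a), combine the uniform Morrey estimate (\ref{morrey}) with the identity $[u_{p_n}]_{s,p_n}=\sqrt[p_n]{\Lambda_{p_n}}$ and the a priori bound $\sqrt[p_n]{\Lambda_{p_n}}\le[\xi]_{s,p_n}\to|\xi|_s$ (via Lemma \ref{sfrac}) to obtain a uniform bound on $\|u_{p_n}\|_{C^{0,s-N/p_n}(\overline\Omega)}$; Arzel\`a--Ascoli then produces a uniformly convergent subsequence $u_{p_n}\to u_\infty\in C(\overline\Omega)$ with $u_\infty\ge 0$ and $u_\infty|_{\partial\Omega}=0$. For (b), fix $q>N/s$; H\"older's inequality on $\Omega\times\Omega$ gives
\[
\int_\Omega\int_\Omega\left(\frac{|u_{p_n}(x)-u_{p_n}(y)|}{|x-y|^{s+N/p_n}}\right)^{\!q}\mathrm{d}x\mathrm{d}y\le|\Omega|^{2(1-q/p_n)}\Lambda_{p_n}^{q/p_n},
\]
and Fatou as $p_n\to\infty$ followed by $q\to\infty$ yield $u_\infty\in C_0^{0,s}(\overline\Omega)$ together with $|u_\infty|_s\le\liminf_n\sqrt[p_n]{\Lambda_{p_n}}$. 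Splitting $\log u_{p_n}=\log^+u_{p_n}-\log^-u_{p_n}$ and using dominated convergence for the uniformly bounded positive part plus Fatou for the nonnegative negative part gives $\int_\Omega(\log u_\infty)\omega\,\mathrm{d}x\ge\limsup_n\int_\Omega(\log u_{p_n})\omega\,\mathrm{d}x=0$, i.e.\ $k_\infty\ge 1$.

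Step (c), the crux, produces test functions in $\mathcal{M}_{p_n}$ that uniformly approximate $u_\infty$. First apply Lemma \ref{dens} to get $\tilde u_k\in C_c^\infty(\Omega)\cap C_0^{0,s}(\overline\Omega)$ with $\tilde u_k\to u_\infty$ uniformly and $\limsup_k|\tilde u_k|_s\le|u_\infty|_s$. Since the $\tilde u_k$ are compactly supported, $k(\tilde u_k)$ may vanish (Remark \ref{-inf}), so I perturb to $\tilde u_k+\varepsilon_k\xi$ with $\xi$ as in Proposition \ref{mfin}, choosing $\varepsilon_k>0$ by the intermediate value theorem so that $k(\tilde u_k+\varepsilon_k\xi)=1$ — this is possible since $\varepsilon\mapsto k(\tilde u_k+\varepsilon\xi)$ is continuous and strictly increasing from a value $\le 1$ (eventually) to $+\infty$. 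A dominated convergence argument — using the integrable minorant $\log(\varepsilon\xi)\in L^1(\omega)$ (a consequence of $k(\xi)=1$) — shows that for each fixed $\varepsilon>0$, $k(\tilde u_k+\varepsilon\xi)\to k(u_\infty+\varepsilon\xi)>k(u_\infty)\ge 1$ as $k\to\infty$, forcing $\varepsilon_k\to 0$. Then (\ref{sineq}) with this normalized test function yields
\[
\sqrt[p_n]{\Lambda_{p_n}}\le[\tilde u_k+\varepsilon_k\xi]_{s,p_n},
\]
and Lemma \ref{sfrac} gives $\lim_n[\tilde u_k+\varepsilon_k\xi]_{s,p_n}=|\tilde u_k+\varepsilon_k\xi|_s\le|\tilde u_k|_s+\varepsilon_k|\xi|_s$. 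Letting $k\to\infty$ produces $\limsup_n\sqrt[p_n]{\Lambda_{p_n}}\le|u_\infty|_s$, which combined with (b) establishes $|u_\infty|_s=\lim_n\sqrt[p_n]{\Lambda_{p_n}}$.

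For (d), since $k_\infty\ge 1$, the function $u_\infty/k_\infty$ is a nonnegative element of $\mathcal{M}_s$, so $|u_\infty/k_\infty|_s\ge\mu_s$. The reverse inequality follows by rerunning the same approximation-with-perturbation scheme with any $v\in\mathcal{M}_s$ in place of $u_\infty$: since $k(v)=1$ being finite forces $v>0$ $\omega$-almost everywhere, the IVT step goes through and produces $\lim_n\sqrt[p_n]{\Lambda_{p_n}}\le|v|_s$ for every such $v$, so that $|u_\infty/k_\infty|_s=|u_\infty|_s/k_\infty\le\mu_s$. Thus $u_\infty/k_\infty$ is a nonnegative minimizer of $|\cdot|_s$ on $\mathcal{M}_s$, and the uniqueness part of Proposition \ref{minimizer} forces $u_\infty/k_\infty=v_s$, which is (\ref{vsu}). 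The main technical hurdle is the verification that $\varepsilon_k\to 0$: the upper semicontinuity of $k$ alone is not enough, and one must use the dominated convergence theorem (exploiting the integrable minorant $\log(\varepsilon\xi)$, integrable because $k(\xi)=1$) to pin down the limiting behavior of the curve $\varepsilon\mapsto k(\tilde u_k+\varepsilon\xi)$ in the double limit $k\to\infty$, $\varepsilon\to 0$.
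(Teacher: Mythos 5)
Your overall strategy is sound and reaches the stated conclusions, but it is genuinely different from the paper's proof at the decisive step, and the difference matters. The paper never tests the minimality of $\Lambda_{p_n}$ against approximations of $u_\infty$; instead it tests the \emph{weak Euler--Lagrange equation} (\ref{extremal}) against the functions $u_k$ of Lemma \ref{dens}, obtaining the inequality $L\int_\Omega (u/u_\infty)\,\omega\,\mathrm{d}x\le |u|_s$ for every $u\in C_0^{0,s}(\overline\Omega)$; taking $u=u_\infty$ gives $L\le|u_\infty|_s$ (the reverse of your Fatou bound), and Jensen applied to $\log(u/u_\infty)$ then yields $(k_\infty)^{-1}\le\int (v_s/u_\infty)\omega\,\mathrm{d}x\le\mu_s/|u_\infty|_s\le (k_\infty)^{-1}$, which forces $\mu_s=|(k_\infty)^{-1}u_\infty|_s$ and hence (\ref{vsu}) by uniqueness. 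Your route uses only the variational characterization (\ref{Lambdap}) plus the perturbation $\tilde u_k+\varepsilon_k\xi$ designed to restore the constraint $k(\cdot)=1$; the $\mathcal{L}^1(\omega)$-minorant $\log(\varepsilon\xi)$ (integrable because $k(\xi)=1$) does make the dominated-convergence step and the monotonicity argument for $\varepsilon_k\to0$ work, and I could not find an error in it.

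Two remarks. First, a small gap: the intermediate value theorem step tacitly assumes $k(\tilde u_k)\le 1$, which is guaranteed only when $\omega>0$ a.e.\ (Remark \ref{-inf}); under the standing hypotheses of Theorem \ref{teomana} the weight may vanish on a large set and $k(\tilde u_k)$ could exceed $1$. This is easily patched (replace $\tilde u_k$ by $\tilde u_k/k(\tilde u_k)$ in that case, which only decreases the seminorm), but it should be said. Second, and more importantly: your step (d) proves $\limsup_n\sqrt[p_n]{\Lambda_{p_n}}\le|v|_s$ for \emph{every} nonnegative $v\in\mathcal{M}_s$, hence $L\le\mu_s$; combined with $\mu_s\le|u_\infty|_s/k_\infty=L/k_\infty$ this forces $k_\infty=1$ and $u_\infty=v_s$ outright. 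That is strictly stronger than the theorem and than what the authors claim to be able to prove (Remark \ref{kleq0} presents $k_\infty=1$ as contingent on the unverified property $u_\infty\in W_0^{s,q}(\Omega)$). So either you have genuinely closed the gap left open in Remark \ref{kleq0} --- in which case this deserves to be stated as a separate result --- or there is a hidden flaw in the perturbation scheme; the only candidate I see is the $\varepsilon_k\to0$ argument, and you should write it out in full detail rather than as a plan before claiming the stronger conclusion.
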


\begin{proof}
Let $p_{0}>\dfrac{N}{s}$ be fixed and take $\beta_{0}=s-\frac{N}{p_{0}}.$ For
each $(x,y)\in\Omega\times\Omega,$ with $x\not =y,$ we obtain from
(\ref{morrey})
\begin{align*}
\frac{\left\vert u_{p}(x)-u_{p}(y)\right\vert }{\left\vert x-y\right\vert
^{s-\frac{N}{p_{0}}}}  &  =\frac{\left\vert u_{p}(x)-u_{p}(y)\right\vert
}{\left\vert x-y\right\vert ^{s-\frac{N}{p}}}\left\vert x-y\right\vert
^{N(\frac{1}{p_{0}}-\frac{1}{p})}\\
&  \leq C\left[  u_{p}\right]  _{s,p}\operatorname{diam}(\Omega)^{N(\frac
{1}{p_{0}}-\frac{1}{p})},\quad\forall\,p\geq p_{0},
\end{align*}
where $C$ is uniform with respect to $p$ and $\operatorname{diam}(\Omega)$ is
the diameter of $\Omega.$ Hence, in view of (\ref{up1}) and (\ref{Lamfin}) the
family $\left\{  u_{p}\right\}  _{p\geq p_{0}}$ is bounded in $C_{0}%
^{0,\beta_{0}}(\overline{\Omega}),$ implying that, up to a subsequence,
$u_{p_{n}}\rightarrow u_{\infty}\in C(\overline{\Omega})$ uniformly in
$\overline{\Omega}.$ Of course, the limit function $u_{\infty}$ is nonnegative
in $\Omega$ and vanishes on $\partial\Omega.$

Letting $n\rightarrow\infty$ in the inequality (which follows from
(\ref{morrey}))
\[
\frac{\left\vert u_{p_{n}}(x)-u_{p_{n}}(y)\right\vert }{\left\vert
x-y\right\vert ^{s-\frac{N}{p_{n}}}}\leq C\left[  u_{p_{n}}\right]  _{s,p_{n}%
}=C\sqrt[p_{n}]{\Lambda_{p_{n}}}%
\]
and taking (\ref{Lamfin}) into account we conclude that $u_{\infty}\in
C_{0}^{0,s}(\overline{\Omega}).$

Up to another subsequence, we can assume that
\[
\sqrt[p_{n}]{\Lambda_{p_{n}}}\rightarrow L.
\]

Let $q>\dfrac{N}{s}$ be fixed. By Fatou's Lemma and H\"{o}lder's inequality,
\begin{align*}
\int_{\Omega}\int_{\Omega}\left(  \frac{\left\vert u_{\infty}(x)-u_{\infty
}(y)\right\vert }{\left\vert x-y\right\vert ^{s}}\right)  ^{q}\mathrm{d}%
x\mathrm{d}y  &  \leq\liminf_{n\rightarrow\infty}\int_{\Omega}\int_{\Omega
}\left(  \frac{\left\vert u_{p_{n}}(x)-u_{p_{n}}(y)\right\vert }{\left\vert
x-y\right\vert ^{\frac{N}{p_{n}}+s}}\right)  ^{q}\mathrm{d}x\mathrm{d}y\\
&  \leq\liminf_{n\rightarrow\infty}\left\vert \Omega\right\vert ^{2(1-\frac
{q}{p_{n}})}\left(  \int_{\Omega}\int_{\Omega}\left(  \frac{\left\vert
u_{p_{n}}(x)-u_{p_{n}}(y)\right\vert }{\left\vert x-y\right\vert ^{\frac
{N}{p_{n}}+s}}\right)  ^{p_{n}}\mathrm{d}x\mathrm{d}y\right)  ^{\frac{q}%
{p_{n}}}\\
&  \leq\left\vert \Omega\right\vert ^{2}\liminf_{n\rightarrow\infty}\left[
u_{p_{n}}\right]  _{s,p_{n}}^{q}=\left\vert \Omega\right\vert ^{2}%
\lim_{n\rightarrow\infty}(\sqrt[p_{n}]{\Lambda_{p_{n}}})^{q}=\left\vert
\Omega\right\vert ^{2}L^{q}.
\end{align*}

Therefore,%
\begin{equation}
\left\vert u_{\infty}\right\vert _{s}=\lim_{q\rightarrow\infty}\left(
\int_{\Omega}\int_{\Omega}\left(  \frac{\left\vert u_{\infty}(x)-u_{\infty
}(y)\right\vert }{\left\vert x-y\right\vert ^{s}}\right)  ^{q}\mathrm{d}%
x\mathrm{d}y\right)  ^{1/q}\leq\lim_{q\rightarrow\infty}\left\vert
\Omega\right\vert ^{\frac{2}{q}}L=L. \label{up7}%
\end{equation}

To prove that $k_{\infty}\geq1$ we first note that%
\[
\lim_{t\rightarrow0^{+}}\left(
%TCIMACRO{\dint _{\Omega}}%
%BeginExpansion
{\displaystyle\int_{\Omega}}
%EndExpansion
\left\vert u_{p_{n}}\right\vert ^{t}\omega\mathrm{d}x\right)  ^{\frac{1}{t}%
}=\inf_{0<t<1}\left(
%TCIMACRO{\dint _{\Omega}}%
%BeginExpansion
{\displaystyle\int_{\Omega}}
%EndExpansion
\left\vert u_{p_{n}}\right\vert ^{t}\omega\mathrm{d}x\right)  ^{\frac{1}{t}%
}\leq\left(
%TCIMACRO{\dint _{\Omega}}%
%BeginExpansion
{\displaystyle\int_{\Omega}}
%EndExpansion
\left\vert u_{p_{n}}\right\vert ^{\epsilon}\omega\mathrm{d}x\right)
^{\frac{1}{\epsilon}}\quad\forall\,\epsilon\in(0,1).
\]
Consequently,
\[
1=k(u_{p_{n}})=\lim_{t\rightarrow0^{+}}\left(
%TCIMACRO{\dint _{\Omega}}%
%BeginExpansion
{\displaystyle\int_{\Omega}}
%EndExpansion
\left\vert u_{p_{n}}\right\vert ^{t}\omega\mathrm{d}x\right)  ^{\frac{1}{t}%
}\leq\left(
%TCIMACRO{\dint _{\Omega}}%
%BeginExpansion
{\displaystyle\int_{\Omega}}
%EndExpansion
\left\vert u_{p_{n}}\right\vert ^{\epsilon}\omega\mathrm{d}x\right)
^{\frac{1}{\epsilon}}.
\]
The uniform convergence $u_{p_{n}}\rightarrow u_{\infty}$ then yields%
\[
1\leq\lim_{n\rightarrow\infty}\left(
%TCIMACRO{\dint _{\Omega}}%
%BeginExpansion
{\displaystyle\int_{\Omega}}
%EndExpansion
\left\vert u_{p_{n}}\right\vert ^{\epsilon}\omega\mathrm{d}x\right)
^{\frac{1}{\epsilon}}=\left(
%TCIMACRO{\dint _{\Omega}}%
%BeginExpansion
{\displaystyle\int_{\Omega}}
%EndExpansion
\left\vert u_{\infty}\right\vert ^{\epsilon}\omega\mathrm{d}x\right)
^{\frac{1}{\epsilon}}.
\]

Therefore,
\[
k_{\infty}=k(u_{\infty})=\lim_{\epsilon\rightarrow0^{+}}\left(  \int_{\Omega
}\left\vert u_{\infty}\right\vert ^{\epsilon}\omega\mathrm{d}x\right)
^{\frac{1}{\epsilon}}\geq1.
\]
It follows that $(k_{\infty})^{-1}u_{\infty}\in\mathcal{M}_{s},$ so that
\begin{equation}
\mu_{s}\leq\left\vert (k_{\infty})^{-1}u_{\infty}\right\vert _{s}=(k_{\infty
})^{-1}\left\vert u_{\infty}\right\vert _{s}. \label{mumin2}%
\end{equation}

In the next step we prove that
\begin{equation}
\int_{\Omega}\frac{u}{u_{\infty}}\omega\mathrm{d}x\leq\frac{\left\vert
u\right\vert _{s}}{L}\quad\forall\,u\in C_{0}^{0,s}(\overline{\Omega}).
\label{mumin}%
\end{equation}

According to Lemma \ref{dens} there exists a sequence of nonnegative functions
$\left\{  u_{k}\right\}  _{k\in\mathbf{N}}\subset C_{0}^{0,s}(\overline
{\Omega})\cap W_{0}^{s,p}(\Omega),$ for all $p>1,$ converging uniformly to $u$
in $C(\overline{\Omega})$ and such that%
\[
\limsup_{k\rightarrow\infty}\left\vert u_{k}\right\vert _{s}\leq\left\vert
u\right\vert _{s}.
\]

Since $u_{p}$ is the weak solution of (\ref{extremal}) and $\Lambda
_{p}=\left[  u_{p}\right]  _{s,p}^{p}$ we use H\"{o}lder's inequality to get
\[
\Lambda_{p}\int_{\Omega}\frac{u_{k}}{u_{p}}\omega\mathrm{d}x=\left\langle
(-\Delta_{p})^{s}u_{p},u_{k}\right\rangle \leq\left[  u_{p}\right]
_{s,p}^{p-1}\left[  u_{k}\right]  _{s,p}=(\Lambda_{p})^{\frac{p-1}{p}}\left[
u_{k}\right]  _{s,p}.
\]

It follows that%
\[
\sqrt[p_{n}]{\Lambda_{p_{n}}}\int_{\Omega}\frac{u_{k}}{u_{p_{n}}}%
\omega\mathrm{d}x\leq\left[  u_{k}\right]  _{s,p_{n}}.
\]

Combining Fatou's lemma with the uniform convergence $u_{p_{n}}\rightarrow
u_{\infty}$ and the Lemma \ref{sfrac} we obtain
\[
L\int_{\Omega}\frac{u_{k}}{u_{\infty}}\omega\mathrm{d}x\leq L\liminf
_{n\rightarrow\infty}\int_{\Omega}\frac{u_{k}}{u_{p_{n}}}\omega\mathrm{d}%
x\leq\liminf_{n\rightarrow\infty}\left[  u_{k}\right]  _{s,p_{n}}=\left\vert
u_{k}\right\vert _{s},
\]
that is,%
\[
L\int_{\Omega}\frac{u_{k}}{u_{\infty}}\omega\mathrm{d}x\leq\left\vert
u_{k}\right\vert _{s}.
\]

Letting $k\rightarrow\infty$ and applying Fatou's lemma again we arrive at
(\ref{mumin}):
\[
L\int_{\Omega}\frac{u}{u_{\infty}}\omega\mathrm{d}x\leq L\liminf
_{k\rightarrow\infty}\int_{\Omega}\frac{u_{k}}{u_{\infty}}\omega
\mathrm{d}x\leq\liminf_{k\rightarrow\infty}\left\vert u_{k}\right\vert
_{s}\leq\left\vert u\right\vert _{s}.
\]

Taking $u=u_{\infty}$ in (\ref{mumin}) we obtain%
\[
L\leq\left\vert u_{\infty}\right\vert _{s}%
\]
and combining this with (\ref{up7}) we conclude that
\begin{equation}
L=\left\vert u_{\infty}\right\vert _{s}. \label{mumin1}%
\end{equation}

Now, let $0\leq u\in\mathcal{M}_{s}$ be fixed. Then (\ref{jensen1}) yields
\begin{align*}
-\int_{\Omega}(\log u_{\infty})\omega\mathrm{d}x  &  =\int_{\Omega}(\log
u)\omega\mathrm{d}x-\int_{\Omega}(\log u_{\infty})\omega\mathrm{d}x\\
&  =\int_{\Omega}(\log(\frac{u}{u_{\infty}}))\omega\mathrm{d}x\leq\log\left(
\int_{\Omega}\frac{u}{u_{\infty}}\omega\mathrm{d}x\right)  .
\end{align*}
Hence, (\ref{mumin}) and (\ref{mumin1}) imply that
\begin{equation}
(k_{\infty})^{-1}\leq\int_{\Omega}\frac{u}{u_{\infty}}\omega\mathrm{d}%
x\leq\frac{\left\vert u\right\vert _{s}}{\left\vert u_{\infty}\right\vert
_{s}}\quad\mathrm{whenever}\quad0\leq u\in\mathcal{M}_{s}. \label{mumin4}%
\end{equation}

Combining these estimates at $u=v_{s}$ with (\ref{mumin2}) we obtain
\[
(k_{\infty})^{-1}\leq\int_{\Omega}\frac{v_{s}}{u_{\infty}}\omega
\mathrm{d}x\leq\frac{\left\vert v_{s}\right\vert _{s}}{\left\vert u_{\infty
}\right\vert _{s}}=\frac{\mu_{s}}{\left\vert u_{\infty}\right\vert _{s}}%
\leq(k_{\infty})^{-1},
\]
which leads us to conclude that%
\[
\mu_{s}=\left\vert (k_{\infty})^{-1}u_{\infty}\right\vert _{s}\quad
\mathrm{and}\quad(k_{\infty})^{-1}=\int_{\Omega}\frac{v_{s}}{u_{\infty}}%
\omega\mathrm{d}x.
\]

Since $v_{s}$ is the only nonnegative minimizer of $\left\vert \cdot
\right\vert _{s}$ on $\mathcal{M}_{s}$ we get (\ref{vsu}).
\end{proof}

\begin{corollary}
The following inequalities hold%
\begin{equation}
k(u)\leq\int_{\Omega}\frac{\left\vert u\right\vert }{v_{s}}\omega
\mathrm{d}x\leq\frac{\left\vert u\right\vert _{s}}{\mu_{s}}\quad\forall\,u\in
C_{0}^{0,s}(\overline{\Omega}). \label{corol}%
\end{equation}

\end{corollary}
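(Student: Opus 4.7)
The plan is to obtain the two inequalities independently by combining tools already established in the section: Jensen's inequality (for the left one) and the key estimate (\ref{mumin}) proved inside Theorem \ref{conv1}, together with the identification $v_s=(k_\infty)^{-1}u_\infty$ (for the right one).

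For the first inequality $k(u)\leq \int_\Omega(|u|/v_s)\omega\,\mathrm{d}x$, I would argue as follows. The function $|u|/v_s$ is nonnegative, so Jensen's inequality applied to the concave function $\log$ with the probability measure $\omega\,\mathrm{d}x$ yields
\[
\int_\Omega \log\!\left(\frac{|u|}{v_s}\right)\omega\,\mathrm{d}x \;\leq\; \log\!\left(\int_\Omega \frac{|u|}{v_s}\omega\,\mathrm{d}x\right).
\]
Since $v_s\in\mathcal{M}_s$, i.e.\ $k(v_s)=1$, one has $\int_\Omega \log v_s\,\omega\,\mathrm{d}x=0$, so the left-hand side equals $\int_\Omega \log|u|\,\omega\,\mathrm{d}x=\log k(u)$. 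Exponentiating gives the first inequality.

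For the second inequality $\int_\Omega(|u|/v_s)\omega\,\mathrm{d}x\leq |u|_s/\mu_s$, I would first note that $|u|\in C_0^{0,s}(\overline{\Omega})$ is nonnegative and satisfies $\big||u|\big|_s\leq |u|_s$ (since $\bigl||u(x)|-|u(y)|\bigr|\leq|u(x)-u(y)|$). Then I apply the estimate (\ref{mumin}) from the proof of Theorem \ref{conv1} — which in fact was established precisely for nonnegative test functions via Lemma \ref{dens} — to $|u|$, obtaining
\[
\int_\Omega \frac{|u|}{u_\infty}\omega\,\mathrm{d}x \;\leq\; \frac{\big||u|\big|_s}{L} \;\leq\; \frac{|u|_s}{L}.
\]
Using $u_\infty=k_\infty v_s$ from (\ref{vsu}) and the identity $\mu_s=|v_s|_s=(k_\infty)^{-1}|u_\infty|_s=L/k_\infty$ (from Theorem \ref{conv1}), multiplying the previous estimate by $k_\infty$ yields
\[
\int_\Omega \frac{|u|}{v_s}\omega\,\mathrm{d}x \;=\; k_\infty \int_\Omega \frac{|u|}{u_\infty}\omega\,\mathrm{d}x \;\leq\; \frac{k_\infty\,|u|_s}{L} \;=\; \frac{|u|_s}{\mu_s},
\]
which is the second inequality.

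There is no real obstacle here — the corollary is essentially a repackaging of Jensen's inequality together with (\ref{mumin}) once one uses $v_s=(k_\infty)^{-1}u_\infty$ to rewrite the denominator. The only minor point to watch is that (\ref{mumin}) was derived using the approximation scheme of Lemma \ref{dens} on nonnegative functions, which is why one must pass through $|u|$ rather than $u$ itself.
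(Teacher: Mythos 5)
Your proof is correct and follows essentially the same route as the paper: the second inequality is obtained exactly as in the text by applying (\ref{mumin}) to $\left\vert u\right\vert$ and converting $u_{\infty}=k_{\infty}v_{s}$, $L=\left\vert u_{\infty}\right\vert _{s}=k_{\infty}\mu_{s}$. For the first inequality the paper invokes the first estimate in (\ref{mumin4}) with $w=(k(u))^{-1}\left\vert u\right\vert$ (treating $k(u)=0$ separately), whereas you apply Jensen's inequality directly to $\log(\left\vert u\right\vert /v_{s})$ using $k(v_{s})=1$ -- the same underlying mechanism, since (\ref{mumin4}) was itself derived from Jensen, and your splitting of the logarithm is legitimate because $\int_{\Omega}(\log v_{s})\omega\,\mathrm{d}x=0$ is finite.
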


\begin{proof}
Since we already know that $L=\left\vert u_{\infty}\right\vert _{s}$ and
$u_{\infty}=k_{\infty}v_{s}$ the second inequality in (\ref{corol}) follows
from (\ref{mumin}), with $u$ replaced with $w=\left\vert u\right\vert $ (note
that $\left\vert w\right\vert _{s}\leq\left\vert u\right\vert _{s}$). The
first inequality in (\ref{corol}) is obvious when $k(u)=0$ and, when $k(u)>0,$
it follows from the first inequality in (\ref{mumin4}), with $w=(k(u))^{-1}%
\left\vert u\right\vert \in\mathcal{M}_{s}.$
\end{proof}

\begin{remark}
\label{kleq0}In contrast with what happens in similar problems driven by the
standard $p$-Laplacian, we are not able to prove that $u_{\infty}\in
W_{0}^{s,q}(\Omega)$ for some $q>1.$ Such a property would guarantee that
$u_{\infty}=v_{s}$ and, consequently,
\[
\lim_{p\rightarrow\infty}u_{p}=v_{s}%
\]
(that is, $v_{s}$ would be the only limit point of the family $\left\{
u_{p}\right\}  _{p>1},$ as $p\rightarrow\infty$). Indeed, if $u_{\infty}\in
W_{0}^{s,q}(\Omega)$ for some $q>1$ then, according to Lemma \ref{sfrac},
$u_{\infty}\in W_{0}^{s,p_{n}}(\Omega)$ for all $n$ sufficiently large (such
that $p_{n}\geq q$) and%
\[
\lim_{n\rightarrow\infty}\left[  u_{\infty}\right]  _{s,p_{n}}=\left\vert
u_{\infty}\right\vert _{s}.
\]
Hence, proceeding as in the proof of Theorem \ref{conv1}, we would arrive at
\[
1\leq k_{\infty}\leq\int_{\Omega}\frac{u_{\infty}}{u_{p_{n}}}\omega
\mathrm{d}x\leq\frac{\left[  u_{\infty}\right]  _{s,p_{n}}}{\sqrt[p_{n}%
]{\Lambda_{p_{n}}}}.
\]
Since $\lim_{n\rightarrow\infty}\left[  u_{\infty}\right]  _{s,p_{n}}%
=\lim_{n\rightarrow\infty}\sqrt[p_{n}]{\Lambda_{p_{n}}}=\left\vert u_{\infty
}\right\vert _{s}$ we would conclude that $k_{\infty}=1$ and $u_{\infty}%
=v_{s}.$
\end{remark}

\section{The limit problem\label{sec3}}

For a matter of compatibility with the viscosity approach we add the
hypotheses of continuity and strict positiveness to the weight $\omega$. So,
we assume in this section that
\[
\omega\in C(\Omega)\cap L^{r}(\Omega),\,r>1,\quad\omega>0\quad\mathrm{in}%
\quad\Omega,\quad\mathrm{and}\quad\int_{\Omega}\omega\mathrm{d}x=1.
\]
Note that such $\omega$ satisfies the hypotheses of Theorem \ref{teomana}.

For $1<p<\infty$ we write the $s$-fractional $p$-Laplacian, in its integral
version, as $\left(  -\Delta_{p}\right)  ^{s}=-\mathcal{L}_{p}$ where
\begin{equation}
(\mathcal{L}_{p}u)(x):=2\int_{\mathbb{R}^{N}}\dfrac{\left\vert
u(y)-u(x)\right\vert ^{p-2}(u(y)-u(x))}{\left\vert y-x\right\vert ^{N+sp}%
}\mathrm{d}y. \label{eqp}%
\end{equation}
Corresponding to the case $p=\infty$ we define operator $\mathcal{L}_{\infty}$
by
\begin{equation}
\mathcal{L}_{\infty}:=\mathcal{L}_{\infty}^{+}+\mathcal{L}_{\infty}^{-},
\label{linf}%
\end{equation}
where%
\begin{equation}
\left(  \mathcal{L}_{\infty}^{+}u\right)  (x):=\sup_{y\in\mathbb{R}%
^{N}\setminus\left\{  x\right\}  }\frac{u(y)-u(x)}{\left\vert y-x\right\vert
^{s}}\quad\mathrm{and}\quad\left(  \mathcal{L}_{\infty}^{-}u\right)
(x):=\inf_{y\in\mathbb{R}^{N}\setminus\left\{  x\right\}  }\frac
{u(y)-u(x)}{\left\vert y-x\right\vert ^{s}}. \label{op+-}%
\end{equation}

In the sequel we consider, in the viscosity sense, the problem%
\begin{equation}
\left\{
\begin{array}
[c]{lll}%
\mathcal{L}u=0 & \mathrm{in} & \Omega\\
u=0 & \mathrm{in} & \mathbb{R}^{N}\setminus\Omega,
\end{array}
\right.  \label{eqvisc}%
\end{equation}
where either $\mathcal{L}u=\mathcal{L}_{p}u+\Lambda_{p}u^{-1}\omega,$ with
$1<p<\infty,$ or%
\[
\mathcal{L}u=\mathcal{L}_{\infty}u\quad\mathrm{or}\quad\mathcal{L}%
u=\mathcal{L}_{\infty}^{-}u+\left\vert u_{\infty}\right\vert _{s}.
\]

We recall some definitions related to the viscosity approach for the problem
(\ref{eqvisc}).

\begin{definition}
Let $u\in C(\mathbb{R}^{N})$ such that $u>0$ in $\Omega$ and $u=0$ in
$\mathbb{R}^{N}\setminus\Omega.$ We say that $u$ is a viscosity supersolution
of the equation (\ref{eqvisc}) if
\[
(\mathcal{L}\varphi)(x_{0})\leq0
\]
for all pair $\left(  x_{0},\varphi\right)  \in\Omega\times C_{0}%
^{1}(\mathbb{R}^{N})$ satisfying
\[
\varphi(x_{0})=u(x_{0})\quad\mathrm{and}\quad\varphi(x)\leq u(x)\quad
\forall\,x\in\mathbb{R}^{N}.
\]

Analogously, we say that $u$ is a viscosity subsolution of (\ref{eqvisc}) if
\[
(\mathcal{L}\varphi)(x_{0})\geq0
\]
for all pair $\left(  x_{0},\varphi\right)  \in\Omega\times C_{0}%
^{1}(\mathbb{R}^{N})$ satisfying%
\[
\varphi(x_{0})=u(x_{0})\quad\mathrm{and}\quad\varphi(x)\geq u(x)\quad
\forall\,x\in\mathbb{R}^{N}.
\]

We say that $u$ is a viscosity solution of (\ref{eqvisc}) if it is
simultaneously a subsolution and a supersolution of (\ref{eqvisc}).
\end{definition}

The next lemma can be proved by following, step by step, the proof of
Proposition 11 of \cite{LindLind}.

\begin{lemma}
Let $u\in W_{0}^{s,p}(\Omega)\cap C(\overline{\Omega})$ be a positive weak
solution of (\ref{extremal}). Then $u$ is a viscosity solution of
\begin{equation}
\left\{
\begin{array}
[c]{lll}%
\mathcal{L}_{p}u+\Lambda_{p}u^{-1}\omega=0 & \mathrm{in} & \Omega\\
u=0 & \mathrm{in} & \mathbb{R}^{N}\setminus\Omega.
\end{array}
\right.  \label{Lpvisc}%
\end{equation}

\end{lemma}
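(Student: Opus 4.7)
The plan is to follow the standard pattern for converting weak solutions of fractional $p$-Laplace equations into viscosity solutions, as established in \cite[Proposition 11]{LindLind}, adapted to our singular right-hand side $\Lambda_p u^{-1}\omega$. I would treat the supersolution and subsolution inequalities separately by symmetric arguments; I describe only the first in detail.

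For the supersolution property, fix $(x_0,\varphi)\in\Omega\times C_0^1(\mathbb{R}^N)$ with $\varphi(x_0)=u(x_0)$ and $\varphi\leq u$ on $\mathbb{R}^N$. For small $r>0$ with $\overline{B(x_0,r)}\subset\Omega$, I would introduce the patched test function $\psi_r$, equal to $\varphi$ on $B(x_0,r)$ and to $u$ elsewhere. Because $\varphi\leq u$, one has $\psi_r\leq u$ globally, and $\psi_r\in W_0^{s,p}(\Omega)$ since $\varphi\in C_0^1(\mathbb{R}^N)$ and $u\in W_0^{s,p}(\Omega)\cap C(\overline{\Omega})$. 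Consequently $\eta:=u-\psi_r$ is nonnegative, supported in $\overline{B(x_0,r)}$, and admissible in the weak formulation of (\ref{extremal}), which yields
\[
\langle (-\Delta_p)^s u,u-\psi_r\rangle=\Lambda_p\int_{B(x_0,r)}\frac{u-\psi_r}{u}\,\omega\,\mathrm{d}x.
\]
The elementary monotonicity of $z\mapsto|z|^{p-2}z$ gives $\langle(-\Delta_p)^s u-(-\Delta_p)^s\psi_r,\,u-\psi_r\rangle\geq 0$, so that
\[
\langle(-\Delta_p)^s\psi_r,u-\psi_r\rangle\leq\Lambda_p\int_{B(x_0,r)}\frac{u-\psi_r}{u}\,\omega\,\mathrm{d}x.
\]

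Since $\psi_r=\varphi$ on $B(x_0,r)$ and $\varphi\in C^1$, the left-hand side can be rewritten as the pointwise (principal-value) integral $\int_{B(x_0,r)}(-\Delta_p)^s\psi_r(x)(u-\varphi)(x)\,\mathrm{d}x$. After normalizing by $\int_{B(x_0,r)}(u-\varphi)\,\mathrm{d}x$, with the degenerate case $\varphi\equiv u$ on $B(x_0,r)$ handled by a strict-touching perturbation $\varphi-\varepsilon(r^2-|x-x_0|^2)_+^2$ (which keeps $\varphi\in C_0^1$ and forces $\varphi<u$ on the punctured ball), continuity of $u^{-1}\omega$ at $x_0$ (from $u\in C(\overline{\Omega})$, $u>0$ in $\Omega$, and $\omega\in C(\Omega)$) together with the pointwise convergence of $(-\Delta_p)^s\psi_r(x)$ to $(-\Delta_p)^s\varphi(x_0)$ as $(r,x)\to(0,x_0)$ allows passage to the limit, giving $(-\Delta_p)^s\varphi(x_0)\geq\Lambda_p\omega(x_0)/\varphi(x_0)$, i.e. $(\mathcal{L}_p\varphi)(x_0)+\Lambda_p\varphi(x_0)^{-1}\omega(x_0)\leq 0$. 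The subsolution inequality follows symmetrically with $\varphi$ touching $u$ from above, $\psi_r\geq u$, and $\eta=\psi_r-u\geq 0$.

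The main obstacle will be the limit $r\to 0^+$: one must verify that $(-\Delta_p)^s\psi_r(x)$ converges, uniformly enough to survive the averaging, to the principal-value integral defining $(-\Delta_p)^s\varphi(x_0)$. This is where both the $C^1$ regularity of $\varphi$ (which makes the singular kernel integrable in the principal-value sense) and the strict-touching perturbation are essential, exactly as in \cite{LindLind}; the singular term $\Lambda_p u^{-1}\omega$ introduces no new difficulty because $u$ is strictly positive and continuous on a compact neighborhood of $x_0$, so $u^{-1}\omega$ is bounded and continuous there.
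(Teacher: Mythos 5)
Your scheme has a structural problem with the direction of the inequality, independent of any technical issues. From the monotonicity of $(-\Delta_p)^s$ and the weak formulation of (\ref{extremal}) you correctly obtain
\[
\left\langle (-\Delta_p)^{s}\psi_r,\,u-\psi_r\right\rangle \;\leq\;\Lambda_p\int_{B(x_0,r)}\frac{u-\psi_r}{u}\,\omega\,\mathrm{d}x,
\]
and after normalizing by $\int_{B(x_0,r)}(u-\varphi)\,\mathrm{d}x$ this can only produce an \emph{upper} bound on the weighted average of $(-\Delta_p)^{s}\psi_r$ over $B(x_0,r)$, namely $\leq\Lambda_p\,\omega(x_0)/u(x_0)$ in the limit. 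The supersolution property you are after is the \emph{lower} bound $(-\Delta_p)^{s}\varphi(x_0)\geq\Lambda_p\,\omega(x_0)/\varphi(x_0)$. Worse, since $\varphi\leq\psi_r$ everywhere with equality on $B(x_0,r)$, one has $(-\Delta_p)^{s}\psi_r(x)\leq(-\Delta_p)^{s}\varphi(x)$ pointwise there (the map $t\mapsto|t|^{p-2}t$ is increasing), so your chain bounds from above a quantity that already lies below $(-\Delta_p)^{s}\varphi$, and the desired inequality cannot be extracted; the final ``$\geq$'' in your conclusion does not follow from the displayed ``$\leq$''. The argument the paper actually invokes (Proposition 11 of \cite{LindLind}, followed step by step) proceeds by contradiction and a comparison principle; the direct variant that does work pairs the operator with the positive part of a \emph{lifted} test function minus $u$, i.e.\ with $\eta=(\varphi+c-u)^{+}$ for small $c>0$, for which the monotonicity inequality points the right way.

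Two further steps would fail even if the sign were repaired. First, $\eta=u-\psi_r=(u-\varphi)\chi_{B(x_0,r)}$ is in general discontinuous across $\partial B(x_0,r)$ (you only know $\varphi\leq u$ there, not $\varphi=u$), and a bounded function with a jump across a hypersurface has infinite Gagliardo seminorm whenever $sp\geq1$ --- precisely the relevant regime here, since $s$ is fixed and $p\to\infty$. So $\eta$ is not admissible in the weak formulation; your perturbation $\varphi-\varepsilon(r^2-|x-x_0|^2)_+^2$ does not cure this (it vanishes on $\partial B(x_0,r)$ and leaves the jump intact, and it also destroys the touching at $x_0$). By contrast $(\varphi+c-u)^{+}$ vanishes continuously and genuinely belongs to $W_0^{s,p}$. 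Second, the claimed convergence of $(-\Delta_p)^{s}\psi_r(x)$ to $(-\Delta_p)^{s}\varphi(x_0)$ is false: $\psi_r$ coincides with $u$ outside the shrinking ball $B(x_0,r)$, so the nonlocal tail of $(-\Delta_p)^{s}\psi_r(x)$ is governed by $u$, not by $\varphi$, and for a solution that is merely continuous this principal value need not converge at all. Avoiding any pointwise evaluation of $(-\Delta_p)^{s}u$ is exactly what the contradiction-plus-comparison scheme of \cite{LindLind} is designed for.
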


Our main result in this section is the following, where $u_{\infty}\in
C_{0}^{0,s}(\overline{\Omega})$ is the function given by Theorem \ref{conv1}.

\begin{theorem}
\label{viscu}The function $u_{\infty}\in C_{0}^{0,s}(\overline{\Omega}),$
extended as zero outside $\Omega,$ is both a viscosity supersolution of the
problem
\begin{equation}
\left\{
\begin{array}
[c]{lll}%
\mathcal{L}_{\infty}u=0 & \mathrm{in} & \Omega\\
u=0 & \mathrm{in} & \mathbb{R}^{N}\setminus\Omega
\end{array}
\right.  \label{viscinf}%
\end{equation}
and a viscosity solution of the problem%
\begin{equation}
\left\{
\begin{array}
[c]{lll}%
\mathcal{L}_{\infty}^{-}u+\left\vert u_{\infty}\right\vert _{s}=0 &
\mathrm{in} & \Omega\\
u=0 & \mathrm{in} & \mathbb{R}^{N}\setminus\Omega.
\end{array}
\right.  \label{visc}%
\end{equation}

Moreover, $u_{\infty}$ is strictly positive in $\Omega$ and the only
minimizers of $\left\vert \cdot\right\vert _{s}$ on $\mathcal{M}_{s}$ are
\begin{equation}
-v_{s}\quad\mathrm{and}\quad v_{s}. \label{+-vs}%
\end{equation}

\end{theorem}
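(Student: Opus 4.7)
The plan is to pass to the limit $p_{n}\to\infty$ in the viscosity formulation (\ref{Lpvisc}) satisfied by $u_{p_{n}}$. Given a test function $\varphi\in C_{0}^{1}(\mathbb{R}^{N})$ touching $u_{\infty}$ from below (resp.\ above) at $x_{0}\in\Omega$, I would perturb it by $\mp\epsilon\left\vert y-x_{0}\right\vert^{2}$ (plus a normalizing constant) to make the contact strict at $x_{0}$, then use the uniform convergence $u_{p_{n}}\to u_{\infty}$ (Theorem \ref{conv1}) together with the compactness of $\overline{\Omega}$ to extract $x_{p_{n}}\to x_{0}$ at which the perturbed test function touches $u_{p_{n}}$ from the same side; the $p_{n}$-viscosity inequality then applies at $x_{p_{n}}$. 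Writing $\mathcal{L}_{p}\varphi(x)=2A_{+}(x)-2A_{-}(x)$ with
\[
A_{\pm}(x):=\int_{\{y\,:\,\pm(\varphi(y)-\varphi(x))>0\}}\frac{\left\vert\varphi(y)-\varphi(x)\right\vert^{p-1}}{\left\vert y-x\right\vert^{N+sp}}\mathrm{d}y,
\]
a standard $L^{p}$-to-$L^{\infty}$ computation (as in \cite{Chamb,LindLind,FPL}) yields $(2A_{+}(x_{p_{n}}))^{1/(p_{n}-1)}\to\mathcal{L}_{\infty}^{+}\varphi(x_{0})$ and $(2A_{-}(x_{p_{n}}))^{1/(p_{n}-1)}\to-\mathcal{L}_{\infty}^{-}\varphi(x_{0})$, while Theorem \ref{conv1} gives $\Lambda_{p_{n}}^{1/(p_{n}-1)}\to\left\vert u_{\infty}\right\vert_{s}$.

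For the supersolution of (\ref{viscinf}), the $p_{n}$-inequality at $x_{p_{n}}$ reads
\[
2A_{+}(x_{p_{n}})+\Lambda_{p_{n}}u_{p_{n}}(x_{p_{n}})^{-1}\omega(x_{p_{n}})\leq 2A_{-}(x_{p_{n}}),
\]
so in particular $2A_{+}(x_{p_{n}})\leq 2A_{-}(x_{p_{n}})$, and the $(p_{n}-1)$-th root gives $\mathcal{L}_{\infty}^{+}\varphi(x_{0})\leq -\mathcal{L}_{\infty}^{-}\varphi(x_{0})$, i.e., $\mathcal{L}_{\infty}\varphi(x_{0})\leq 0$. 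From this I can deduce $u_{\infty}>0$ in $\Omega$ by contradiction: if $u_{\infty}(x_{0})=0$ at some $x_{0}\in\Omega$, pick $y_{1}\in\Omega$ with $u_{\infty}(y_{1})>0$ (possible since $u_{\infty}=k_{\infty}v_{s}\not\equiv 0$), shrink $\rho>0$ so that $\overline{B(y_{1},\rho)}\subset\Omega\setminus\{x_{0}\}$ and $u_{\infty}\geq c>0$ on this ball, and take a nontrivial, nonnegative $\varphi\in C_{c}^{\infty}(B(y_{1},\rho))$ with $\sup\varphi\leq c$; then $\varphi$ touches $u_{\infty}$ from below at $x_{0}$ with $\mathcal{L}_{\infty}^{+}\varphi(x_{0})\geq\varphi(y_{1})/\left\vert y_{1}-x_{0}\right\vert^{s}>0$ and $\mathcal{L}_{\infty}^{-}\varphi(x_{0})=0$ (since $\varphi\geq 0$ and vanishes outside $B(y_{1},\rho)$), so $\mathcal{L}_{\infty}\varphi(x_{0})>0$, contradicting the supersolution property.

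The supersolution of (\ref{visc}) follows similarly: now that $u_{\infty}(x_{0})>0$ for all $x_{0}\in\Omega$, the quantity $(u_{p_{n}}(x_{p_{n}})^{-1}\omega(x_{p_{n}}))^{1/(p_{n}-1)}$ tends to $1$ (using also that $\omega$ is continuous and positive), so dropping the nonnegative $2A_{+}(x_{p_{n}})$ from the left in the $p_{n}$-inequality and taking $(p_{n}-1)$-th roots of $\Lambda_{p_{n}}u_{p_{n}}(x_{p_{n}})^{-1}\omega(x_{p_{n}})\leq 2A_{-}(x_{p_{n}})$ yields $\left\vert u_{\infty}\right\vert_{s}\leq -\mathcal{L}_{\infty}^{-}\varphi(x_{0})$. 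The subsolution of (\ref{visc}) requires no PDE argument at all: if $\varphi$ touches $u_{\infty}$ from above at $x_{0}$, then for every $y\in\mathbb{R}^{N}$,
\[
\varphi(y)-\varphi(x_{0})\geq u_{\infty}(y)-u_{\infty}(x_{0})\geq -\left\vert u_{\infty}\right\vert_{s}\left\vert y-x_{0}\right\vert^{s},
\]
hence $\mathcal{L}_{\infty}^{-}\varphi(x_{0})\geq -\left\vert u_{\infty}\right\vert_{s}$ directly from the H\"{o}lder regularity of $u_{\infty}$.

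For the uniqueness claim (\ref{+-vs}), since $v_{s}=k_{\infty}^{-1}u_{\infty}>0$ in $\Omega$, any minimizer $w$ of $\left\vert\cdot\right\vert_{s}$ on $\mathcal{M}_{s}$ gives, by the Remark preceding Proposition \ref{minimizer}, that $\left\vert w\right\vert$ is also a nonnegative minimizer, so $\left\vert w\right\vert=v_{s}>0$ in $\Omega$ by the uniqueness in Proposition \ref{minimizer}; continuity of $w$ and connectedness of $\Omega$ then force $w=v_{s}$ or $w=-v_{s}$. The main technical obstacle lies in the $p\to\infty$ analysis of the split integrals $A_{\pm}(x_{p_{n}})^{1/(p_{n}-1)}$, a standard but delicate $L^{p}$-norm limit argument that we carry out following the approach of \cite{Chamb,LindLind,FPL}.
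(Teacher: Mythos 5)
Your proposal is correct and follows essentially the same route as the paper: pass to the limit in the viscosity inequality for $u_{p_{n}}$ at touching points $x_{n}\rightarrow x_{0}$, split $\mathcal{L}_{p_{n}}\varphi$ into the positive and negative parts (your $2A_{\pm}$ are exactly the paper's $A_{n}^{p_{n}-1}$ and $B_{n}^{p_{n}-1}$), invoke the $L^{p}$-to-$L^{\infty}$ limit of \cite[Lemma 6.1]{FPL} for $(2A_{\pm})^{1/(p_{n}-1)}$, drop $2A_{+}$ (resp. the singular term $C_{n}^{p_{n}-1}$) to obtain the supersolution property for (\ref{visc}) (resp. (\ref{viscinf})), and get the subsolution property of (\ref{visc}) for free from the $s$-H\"{o}lder bound on $u_{\infty}$; the uniqueness argument for (\ref{+-vs}) is also identical. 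The one genuine point of divergence is the strict positivity of $u_{\infty}$: the paper simply cites Lemma 22 of \cite{LindLind}, whereas you derive it directly from the supersolution property of (\ref{viscinf}) by testing with a nonnegative bump supported near a point where $u_{\infty}>0$, which forces $\mathcal{L}_{\infty}^{+}\varphi(x_{0})>0$ and $\mathcal{L}_{\infty}^{-}\varphi(x_{0})=0$ at a hypothetical interior zero $x_{0}$. Your barrier argument is sound and makes the proof self-contained at the cost of a few extra lines; it is in substance the same minimum-principle mechanism that underlies the cited lemma.
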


\begin{proof}
We begin by proving that $u_{\infty}$ is a viscosity supersolution of
(\ref{visc}). For this, let us fix $\left(  x_{0},\varphi\right)  \in
\Omega\times C_{0}^{1}(\mathbb{R}^{N})$ satisfying
\begin{equation}
\varphi(x_{0})=u_{\infty}(x_{0})\quad\mathrm{and}\quad\varphi(x)\leq
u_{\infty}(x)\quad\forall\,x\in\mathbb{R}^{N}. \label{visc3}%
\end{equation}

Without loss of generality we can assume that
\[
\varphi(x)<u_{\infty}(x)\quad\forall\,x\in\mathbb{R}^{N},
\]
what allows us to assure that $u_{p_{n}}-\varphi$ assumes its minimum value at
a point $x_{n},$ with $x_{n}\rightarrow x_{0}.$

Let $c_{n}:=u_{p_{n}}(x_{n})-\varphi(x_{n}).$ Of course, $c_{n}\rightarrow0$
(due to the uniform convergence $u_{p_{n}}\rightarrow u_{\infty}$). By
construction,
\[
\varphi(x_{n})+c_{n}=u_{p_{n}}(x_{n})\quad\mathrm{and}\quad\varphi
(x)+c_{n}\leq u_{p_{n}}(x)\quad\forall\,x\in\mathbb{R}^{N}.
\]

According to the previous lemma, $u_{p}$ is a viscosity supersolution of
(\ref{Lpvisc}) since it is a viscosity solution of the same problem.
Therefore,%
\[
(\mathcal{L}_{p_{n}}\varphi)(x_{n})+\Lambda_{p_{n}}\frac{\omega(x_{n}%
)}{u_{p_{n}}(x_{n})}=(\mathcal{L}_{p_{n}}(\varphi+c_{n}))(x_{n})+\Lambda
_{p_{n}}\frac{\omega(x_{n})}{\varphi(x_{n})+c_{n}}\leq0,
\]
an inequality that can be rewritten as
\[
A_{n}^{p_{n}-1}+C_{n}^{p_{n}-1}\leq B_{n}^{p_{n}-1}%
\]
where%
\[
A_{n}^{p_{n}-1}=2\int_{\mathbb{R}^{N}}\dfrac{\left\vert \varphi(y)-\varphi
(x_{n})\right\vert ^{p_{n}-2}(\varphi(y)-\varphi(x_{n}))^{+}}{\left\vert
y-x\right\vert ^{N+sp_{n}}}\mathrm{d}y\geq0,
\]%
\[
B_{n}^{p_{n}-1}=2\int_{\mathbb{R}^{N}}\dfrac{\left\vert \varphi(y)-\varphi
(x_{n})\right\vert ^{p_{n}-2}(\varphi(y)-\varphi(x_{n}))^{-}}{\left\vert
y-x\right\vert ^{N+sp_{n}}}\mathrm{d}y\geq0,
\]
and%
\[
C_{n}^{p_{n}-1}=\Lambda_{p_{n}}\frac{\omega(x_{n})}{u_{p_{n}}(x_{n})}>0.
\]
(Here, $a^{+}:=\max\left\{  a,0\right\}  $ and $a^{-}:=\max\left\{
-a,0\right\}  ,$ so that $a=a^{+}-a^{-}.$)

According to Lemma 6.1 of \cite{FPL}, which was adapted from Lemma 6.5 of
\cite{Chamb}, we have
\[
\lim_{n\rightarrow\infty}A_{n}=\left(  \mathcal{L}_{\infty}^{+}\varphi\right)
(x_{0})\quad\mathrm{and}\quad\lim_{n\rightarrow\infty}B_{n}=-\left(
\mathcal{L}_{\infty}^{-}\varphi\right)  (x_{0}).
\]

Hence, noticing that%
\[
A_{n}^{p_{n}-1}\leq A_{n}^{p_{n}-1}+C_{n}^{p_{n}-1}\leq B_{n}^{p_{n}-1}%
\]
we conclude that%
\[
\left(  \mathcal{L}_{\infty}\varphi\right)  (x_{0})=\left(  \mathcal{L}%
_{\infty}^{+}\varphi\right)  (x_{0})+\left(  \mathcal{L}_{\infty}^{-}%
\varphi\right)  (x_{0})\leq0
\]
since%
\[
\left(  \mathcal{L}_{\infty}^{+}\varphi\right)  (x_{0})=\lim_{n\rightarrow
\infty}A_{n}\leq\lim_{n\rightarrow\infty}B_{n}=-\left(  \mathcal{L}_{\infty
}^{-}\varphi\right)  (x_{0}).
\]
We have proved that $u_{\infty}$ is a supersolution of (\ref{viscinf}).
Therefore, by directly applying Lemma 22 of \cite{LindLind} we conclude
$u_{\infty}>0$ in $\Omega.$

The strict positiveness of $u_{\infty}$ in $\Omega$ and the uniqueness of the
nonnegative minimizers of $\left\vert \cdot\right\vert _{s}$ on $\mathcal{M}%
_{s}$ imply that if $w\in\mathcal{M}_{s}$ is such that
\[
\left\vert w\right\vert _{s}=\min_{u\in\mathcal{M}_{s}}\left\vert u\right\vert
_{s}%
\]
then $\left\vert w\right\vert =v_{s}=(k_{\infty})^{-1}u_{\infty}>0$ in
$\Omega$ (recall that $\left\vert w\right\vert $ is also a minimizer). The
continuity of $w$ then implies that either $w>0$ in $\Omega$ or $w<0$ in
$\Omega.$ Consequently, $w=v_{s}$ or $w=-v_{s}.$

Now, recalling that%
\[
\lim_{n\rightarrow\infty}(\Lambda_{p_{n}})^{\frac{1}{p_{n}-1}}=\left\vert
u_{\infty}\right\vert _{s}%
\]
and using that $\omega(x_{0})>0$ and $u_{\infty}(x_{0})>0$ we have
\[
\lim_{n\rightarrow\infty}C_{n}=\left\vert u_{\infty}\right\vert _{s}%
\]

Hence, since
\[
C_{n}^{p_{n}-1}\leq A_{n}^{p_{n}-1}+C_{n}^{p_{n}-1}\leq B_{n}^{p_{n}-1},
\]
we obtain
\[
\left\vert u_{\infty}\right\vert _{s}=\lim_{n\rightarrow\infty}C_{n}\leq
\lim_{n\rightarrow\infty}B_{n}=-\left(  \mathcal{L}_{\infty}^{-}%
\varphi\right)  (x_{0}).
\]
It follows that $u_{\infty}$ is a viscosity supersolution of (\ref{visc}).

Now, let us take a pair $\left(  x_{0},\varphi\right)  \in\Omega\times
C_{0}^{1}(\mathbb{R}^{N})$ satisfying
\begin{equation}
\varphi(x_{0})=u_{\infty}(x_{0})\quad\mathrm{and}\quad\varphi(x)\geq
u_{\infty}(x)\quad\forall\,x\in\mathbb{R}^{N}. \label{visc2}%
\end{equation}

Since
\[
-\left\vert u_{\infty}\right\vert _{s}\leq\frac{u_{\infty}(x)-u_{\infty}%
(x_{0})}{\left\vert x-x_{0}\right\vert ^{s}}\leq\frac{\varphi(x)-\varphi
(x_{0})}{\left\vert x-x_{0}\right\vert ^{s}}\quad\forall\,x\in\mathbb{R}%
^{N}\setminus\left\{  x_{0}\right\}  ,
\]
we have%
\[
-\left\vert u_{\infty}\right\vert _{s}\leq\inf_{x\in\mathbb{R}^{N}%
\setminus\left\{  x_{0}\right\}  }\frac{\varphi(x)-\varphi(x_{0})}{\left\vert
x-x_{0}\right\vert ^{s}}=\left(  \mathcal{L}_{\infty}^{-}\varphi\right)
(x_{0}).
\]
Therefore, $u_{\infty}$ is a viscosity subsolution of (\ref{visc}).
\end{proof}

Since $v_{s}=(k_{\infty})^{-1}u_{\infty}$ is the only positive minimizer of
$\left\vert \cdot\right\vert _{s}$ on $C_{0}^{0,s}(\overline{\Omega}%
)\setminus\left\{  0\right\}  $ and $\mathcal{L}_{\infty}^{-}(ku)=k\mathcal{L}%
_{\infty}^{-}u\ $\ for any positive constant $k,$ the following corollary is immediate.

\begin{corollary}
\label{viscv}The minimizer $v_{s}$ is a viscosity solution of the problem%
\[
\left\{
\begin{array}
[c]{lll}%
\mathcal{L}_{\infty}^{-}u+\mu_{s}=0 & \mathrm{in} & \Omega\\
u=0 & \mathrm{in} & \mathbb{R}^{N}\setminus\Omega.
\end{array}
\right.
\]

\end{corollary}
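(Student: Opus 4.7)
The plan is to deduce this corollary directly from Theorem \ref{viscu} by a pure scaling argument exploiting the positive $1$-homogeneity of $\mathcal{L}_\infty^-$. Theorem \ref{conv1} gives the relation $u_\infty = k_\infty v_s$ with $k_\infty \geq 1$, hence $|u_\infty|_s = k_\infty |v_s|_s = k_\infty \mu_s$; Theorem \ref{viscu} guarantees that $u_\infty$ (extended by zero outside $\Omega$) is a viscosity solution of $\mathcal{L}_\infty^- u + |u_\infty|_s = 0$ in $\Omega$ with zero exterior data, and that $u_\infty > 0$ in $\Omega$. So $v_s = (k_\infty)^{-1} u_\infty$ is likewise strictly positive in $\Omega$, lies in $C_0^{0,s}(\overline{\Omega})$, and, extended by zero outside $\Omega$, satisfies the stated exterior condition.

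The core verification is the test-function translation. Suppose $(x_0,\varphi) \in \Omega \times C_0^1(\mathbb{R}^N)$ with $\varphi(x_0) = v_s(x_0)$ and $\varphi \leq v_s$ in $\mathbb{R}^N$. Multiplying by the positive constant $k_\infty$ yields $k_\infty \varphi \in C_0^1(\mathbb{R}^N)$ with $k_\infty \varphi(x_0) = u_\infty(x_0)$ and $k_\infty \varphi \leq u_\infty$ in $\mathbb{R}^N$; the supersolution property of $u_\infty$ then gives
\[
(\mathcal{L}_\infty^-(k_\infty \varphi))(x_0) + |u_\infty|_s \leq 0.
\]
Using $\mathcal{L}_\infty^-(k_\infty \varphi) = k_\infty \mathcal{L}_\infty^- \varphi$ (direct from the definition of $\mathcal{L}_\infty^-$ as an infimum of positively $1$-homogeneous quotients) together with $|u_\infty|_s = k_\infty \mu_s$ and dividing by $k_\infty > 0$ produces $(\mathcal{L}_\infty^- \varphi)(x_0) + \mu_s \leq 0$. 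The subsolution inequality follows by reversing the direction of the touching: a test function touching $v_s$ from above at $x_0$ scales to one touching $u_\infty$ from above at $x_0$, and the same homogeneity/division argument applies.

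There is essentially no obstacle here: the class $C_0^1(\mathbb{R}^N)$ is closed under multiplication by a positive scalar, and $\mathcal{L}_\infty^-$ is positively $1$-homogeneous, so the entire argument is a one-line rescaling of the viscosity inequalities from Theorem \ref{viscu}. No new analytic input (approximation, compactness, or passage to the limit in $p$) is required beyond what has already been established.
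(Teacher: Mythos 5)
Your proposal is correct and is essentially the paper's own argument: the authors dispose of the corollary in one sentence by invoking the relation $v_{s}=(k_{\infty})^{-1}u_{\infty}$ together with the positive homogeneity $\mathcal{L}_{\infty}^{-}(ku)=k\,\mathcal{L}_{\infty}^{-}u$, which is exactly the rescaling of test functions and of the constant $\left\vert u_{\infty}\right\vert _{s}=k_{\infty}\mu_{s}$ that you carry out explicitly. Your write-up merely fills in the details the paper calls ``immediate,'' including the needed positivity of $v_{s}$ inherited from $u_{\infty}>0$.
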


\section{Acknowledgements}

G. Ercole was partially supported by CNPq/Brazil (306815/2017-6) and
Fapemig/Brazil (CEX-PPM-00137-18). R. Sanchis was partially supported by
CNPq/Brazil (310392/2017-9) and Fapemig/Brazil (CEX-PPM-00600-16). G. A.
Pereira was partially supported by Capes/Brazil (Finance Code 001).

\end{document}